\DeclareMathOperator{\supp}{supp}
\DeclareMathOperator{\sgn}{sgn}
\crefname{axiom}{Axiom}{Axioms}
\crefname{lemma}{Lemma}{Lemmas}
\crefname{remark}{Remark}{Remarks}
\crefname{corollary}{Corollary}{Corollaries}
\crefname{assumption}{Assumption}{Assumptions}
\crefname{theorem}{Theorem}{Theorems}
\begin{document}

\title{Nonlinear Einstein paradigm of Brownian motion and localization property of solutions}

\author[1]{Ivan C. Christov}

\author[2]{Isanka Garli Hevage}

\author[2,3]{Akif Ibraguimov}

\author[2]{Rahnuma Islam}

\authormark{CHRISTOV \textsc{et al}.}

\address[1]{\orgdiv{School of Mechanical Engineering}, \orgname{Purdue University}, \orgaddress{West Lafayette, \state{Indiana} 47907, \country{USA}}}

\address[2]{\orgdiv{Department of Mathematics \& Statistics}, \orgname{Texas Tech University}, \orgaddress{Lubbock, \state{Texas} 79409, \country{USA}}}

\address[3]{\orgdiv{Oil and Gas Research Institute}, \orgname{Russian Academy of Sciences}, \orgaddress{Moscow, \country{Russia}}}

\corres{\email{christov@purdue.edu}, 
 \email{isankaupul.garlihevage@ttu.edu}, 
 \email{Akif.Ibraguimov@ttu.edu}, 
 \email{rahnuma.islam@ttu.edu}}

\abstract[Summary]{We employ a generalization of Einstein's random walk paradigm for diffusion to derive a class of multidimensional degenerate nonlinear parabolic equations in non-divergence form. Specifically, in these equations, the diffusion coefficient can depend on both the dependent variable and its gradient, and it vanishes when either one of the latter does. It is known that solutions of such degenerate equations can exhibit finite speed of propagation (so-called localization property of solutions). We give a proof of this property using a De Giorgi--Ladyzhenskaya iteration procedure for non-divergence-from equations. A mapping theorem is then established to a divergence-form version of the governing equation for the case of one spatial dimension. Numerical results via a finite-difference scheme are used to illustrate the main mathematical results for this special case. For completeness, we also provide an explicit construction of the one-dimensional self-similar solution with finite speed of propagation function, in the sense of Kompaneets--Zel'dovich--Barenblatt. We thus show how the finite speed of propagation quantitatively depends on the model's parameters.}

\keywords{Degenerate parabolic equations, Diffusion processes, Localization property of solutions, Gradient-dependent diffusivity}


\jnlcitation{\cname{%
\author{I.\ C.\ Christov}, 
\author{I.\ Garli Hevage}, 
\author{A.\ Ibraguimov}, and
\author{R.\ Islam}} (\cyear{2023}), 
\ctitle{Nonlinear Einstein paradigm of Brownian motion and localization property of solutions}, \cjournal{Math.\ Meth.\ Appl.\ Sci.}, doi:10.1002/mma.9220.}

\maketitle


\section{Introduction}
\label{sec:intro}

A number of physical processes are governed by nonlinear parabolic partial differential equations (PDEs), specifically ones in which the coefficient of the highest-order spatial derivative (identified as the \emph{diffusion coefficient} in the case of a parabolic PDE) can vanish  \cite{DiBenedetto93,Vazquez07}. These \emph{degenerate} parabolic equations can exhibit finite speed of propagation, \textit{i.e.}, initial conditions of compact support expand the length of their support in time at a finite speed (see, \textit{e.g.}, \S1.9 of Straughan's book \cite{S11}). This property is in contrast to the case of linear parabolic equations, for which initial data ``propagates'' instantaneously throughout the domain, leading to the so-called \emph{paradox of heat conduction} \cite{Keller2004}. More generally, compact (localized) solutions are but one example of traveling waves in nonlinear diffusion-convection-reaction equations \cite{GK04}. 

In physics and in engineering, nonlinear degenerate diffusion equations have been derived to describe the flow and spreading of compressible fluids (gases) through porous media \cite{Barenblatt52,Philip70}. The evolution of the thickness of thin liquid films on solid surfaces \cite{Oron97}, determined by the balance of viscous flow forces, surface tension, and van der Waals interactions, is also governed by a degenerate nonlinear parabolic equation, now of higher order \cite{Bertozzi96,Witelski20}. High-temperature phenomena, such as shock waves in ionized gases, can also be described by nonlinear degenerate diffusion equations, in which the thermal diffusivity vanishes at a certain temperature  \cite{Zel67}. In these previous examples, the diffusion coefficient may only depend on (and vanish with) the dependent variable (say, gas concentration, fluid film height, or temperature). Interestingly, there are also physics and engineering problems in which the diffusivity can also vanish when the \emph{gradient} of the dependent variable vanishes. One example is pre-Darcy flow in saturated porous media \cite{Celik17}. Another example is the spreading of confined layers of non-Newtonian fluids \cite{Ciriello2016,DiFed2017}, for which the complex rheology of the fluid leads to a degenerating gradient-dependent diffusivity (unlike the Newtonian case \cite{Oron97}); see also the review by Ghodgaonkar and Christov \cite{gc19}. Indeed, degenerate equations with gradient-dependent diffusivity are traditionally associated with problems of non-Newtonian filtration through a porous medium (see, \textit{e.g.}, the discussion by Kalashnikov\cite{Kalashnikov1987}).

In the present work, we reconsider all of the above models, and their governing nonlinear parabolic PDEs, from the point of view of Einstein's random walk paradigm \cite{Einstein05,Einstein56} for diffusion in a continuous medium. Specifically, we show how to derive a generic degenerate nonlinear parabolic equation, in non-divergence form, with the diffusivity depending on \emph{both} the dependent variable and its gradient. This approach can be interpreted as a complementary conceptual derivation of the governing equation for the physical situations mentioned above, but \emph{without} appealing to Fick's (or Darcy's or Fourier's) law (see, \textit{e.g.}, Ch.~1 of the book by Cunningham \cite{Cunningham1980}), the continuity (conservation of mass) equation, and the thermodynamic closures that relate the various system variables (such as density, mass, and concentration) to each other.

To this end, this paper is organized as follows. In \cref{sec:paradigm}, we introduce the Einstein random-walk paradigm and its generalization used for the present derivations. In \cref{sec:max_bd}, we prove a maximum principle for the solution of the multidimensional nonlinear parabolic equation derived, taking into account its degeneracy with respect to the dependent variable and its gradient. To accomplish this task, we regularize the degeneracy of the PDE, proving the maximum principle for a smooth viscosity solution, then we take the limit. Next, in \cref{sec:localization}, we strengthen our result by proving the localization property of solutions using a De Giorgi--Ladyzhenskaya iteration procedure, \emph{without} the need for explicitly constructing a barrier function. Again, we work with the regularized problem and in terms of the  viscosity solution, for convenience, but the final estimate is independent of the (small) regularization parameter. In \cref{sec:mapping}, for the case of one spatial dimension, we develop transition formulas to map the non-divergence form of the PDE (discussed in earlier sections) to the more traditional divergence-form case. Beyond showing a fundamental equivalence of the two forms of the PDE, the latter is more convenient for numerical simulation, since we have previously developed and validated a finite-difference scheme for divergence-form nonlinear degenerate PDEs \cite{gc19}. Then, in \cref{sec:numerical} again for the special case of one spatial dimension, we construct explicit self-similar solutions with finite speed of propagation and perform simulations of the divergence-form degenerate PDE to illustrate and verify our main mathematical result from \cref{sec:localization}. Finally, \cref{sec:discussion} discusses the broader context of our results and avenues for future work.


\section{Einstein paradigm for diffusion}
\label{sec:paradigm}

\subsection{Setup and preliminaries}
\label{sec:setup}

In the celebrated work of Einstein \cite{Einstein05}, Brownian motion was explained using a mathematical model based on a thought experiment. Since then, the stochastic interpretation of this approach has been applied to many transport processes arising in physics, chemistry, and engineering (see, \textit{e.g.}, \S1.2.1 of Gardiner's book \cite{G09}). 

To formulate the PDE model (following our earlier work \cite{CII20}), consider a collection of  ``particles,'' which here stands for any discrete physical object and some physics governing its motion with respect to nearby particles. The particles may be actual particles, or molecules, or physical agents or even animals \cite{MCB}, depending on the physical process under consideration. The modeling approach under the Einstein paradigm is generic.

To begin, first we require that there exists a time interval $\tau$ such that (s.t.), during two consecutive intervals $\tau$, the motions performed by individual particles in the system are mutually independent (uncorrelated) events. The interval $\tau$ is ``sufficiently small'' compared to the time scale $t$ of observation of the physical process, but not so small that the motions become correlated. 

Now, we will extend the traditional one-dimensional Einstein model of Brownian motion to the $d$-dimensional real Euclidean space $\mathbb{R}^{d}$. Let $x \in \mathbb{R}^{d} $ and  $u(x,t):\mathbb{R}^d\times[0,\infty)\to\mathbb{R}$. We assume that the number of particles per unit volume at point $x$ at time $t$ can be represented by (\textit{i.e.}, it is proportional to) the concentration, which is the scalar function $u(x,t)$.
\begin{assumption}\label{ext-Eins}
We assume the following extension of the axioms formulated by Einstein \cite{Einstein05,Einstein56}:
\begin{enumerate}
\item Consider a particle ($P$) of particular type moving through the medium of interest. Let $\mathbb{P}(\tau)$ be the set of vectors with non-colliding jumps of $P$ during the time interval $\tau$. We call  $\vec{\Delta} 
= \big(\Delta_1, \hdots ,\Delta_d\big)^\top$ 
the ``vector of free jumps of particle $P$'' if $\vec{\Delta} \in  \mathbb{P}(\tau)$.
\item The interactions of particles during the time interval $\tau$ can occur via absorption (or reaction) through the surrounding medium, via collisions with other particles, and possibly via the spatial domain's boundaries. In general, this key quantity $\tau$ can depend on the concentration of particles $u$, its gradient $\nabla u$, and possibly also $x$ and $t$.  
\item The time interval of free jumps $\tau$, the vector of expected free jump lengths $\vec{\Delta}^e$, and the scalar probability density function $\varphi(\vec{\Delta})$ of free jumps $\vec{\Delta}$ are the quantities that characterize the generalized Brownian process under the Einstein paradigm. Here, the free jumps in each coordinate direction are denoted $\Delta_i$ for $i=1,2,\hdots, d$. 
\item During the time interval $[t,t+\tau]$ in the unit volume around the particle $P$ located at the observation point $x$, it is  possible absorption and/or reaction with other particles (or with the suspending medium) occurs. This effect is represented by the integral
\begin{equation}
    \int_t^{t+\tau}\mathcal{A}\big(u(x, s),s\big) \, ds,
\end{equation}
where $\mathcal{A}(u,t)$ is a potentially nonlinear function that can be positive or negative, depending on the physics considered.
\end{enumerate}
\end{assumption}

\begin{axiom}[Whole universe axiom]
 \begin{equation}\label{uni-ax}
      \int_{\mathbb{P}(\tau)}\varphi(\vec{\Delta}) \,d\vec{\Delta} = 1,
 \end{equation}
 where $d\vec{\Delta} \triangleq  d\Delta_1 d\Delta_2 \cdots d\Delta_d$.
\end{axiom}
Note that, in a view of the definition of the set $\mathbb{P}(\tau)$, if $\vec{\Delta} \notin \mathbb{P}(\tau)$, then $\varphi(\vec{\Delta})=0$.

Let us define the vector of expected free jumps:
\begin{equation}\label{exp-L}
\vec{\Delta}^e \triangleq (\Delta^{e}_{1}, \Delta^{e}_{2}, \hdots, \Delta^{e}_{d})^\top, \quad   \text{where} \quad  \Delta^{e}_{i} \triangleq \int_{\mathbb{P}(\tau)}{\Delta_{i}} \varphi(\vec{\Delta}) \,d\vec{\Delta},
\end{equation}
and the matrix of standard (co-)variances of free jumps:
\begin{equation}\label{var-ij}
\sigma_{ij}^2 \triangleq  \int_{\mathbb{P}(\tau)} \left( {\Delta_{i}}-{\Delta}^{e}_{i} \right) \left( {\Delta_{j}}-{\Delta}^{e}_{j} \right) \varphi(\vec{\Delta}) \,d\vec{\Delta}.
\end{equation}
Evidently $\vec{\Delta}_{e}(x,t)$ and $\sigma_{ij}(x,t)$ depend on space $x$ and time $t$. Next, considering \cref{ext-Eins} and the above, we state the generalized Einstein paradigm axiom for the number of particles found at time $t+\tau$ in an infinitesimal control volume $dv \subset\mathbb{R}^d$ containing the point $x$:
\begin{axiom}[Einstein conservation law]\label{Einstein_conserv_axiom}
\begin{equation}\label{Einstein_conserv_eq}
u(x, t+\tau) \cdot dv =  
\Bigg(\,\underbrace{\int_{\mathbb{P}(\tau)} u\big(x+ \vec{\Delta}, t\big) \varphi(\vec{\Delta}) \,d\vec{\Delta}}_{\text{free jumps}}
+ 
\underbrace{\int_t^{t+\tau}\mathcal{A}\big(u(x, s),s\big) \,ds}_{\text{absorption/reaction}}
\Bigg)\cdot dv .
\end{equation}
\end{axiom}
The Einstein conservation law (\cref{Einstein_conserv_axiom}) is  illustrated schematically in \cref{fig:schematic}.

\begin{figure}
\centering
\includegraphics[scale=0.75]{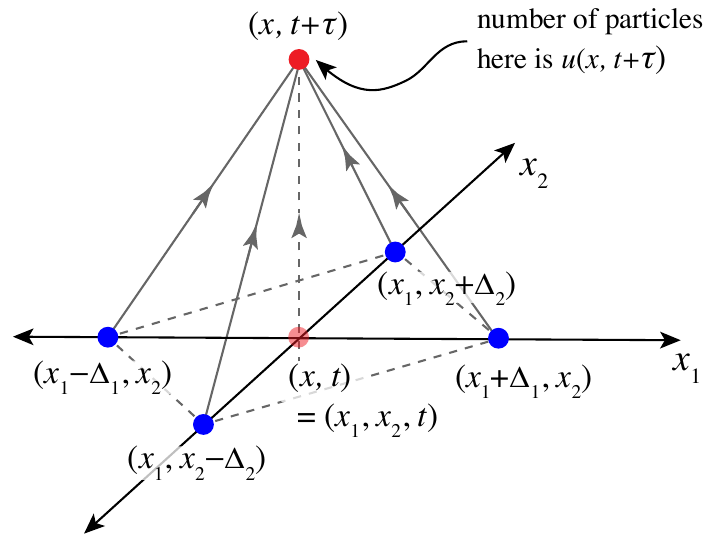}
\caption{Schematic illustrating the notation and jump process, from $t$ to $t+\tau$, for the proposed generalized Einstein random walk paradigm for $x=(x_1,x_2)\in\mathbb{R}^2$.}
\label{fig:schematic}
\end{figure}

Assuming that $u(x,t)\in \mathcal{C}^{2,1}_{x,t}$ (\textit{i.e.}, $u$ is twice continuously differentiable in $x$ and once in $t$) and using the multidimensional analog of the Caratheodory Theorem \cite{Bartle} and arguments from our previous work \cite{CII20}, Einstein's conservation equation \eqref{Einstein_conserv_eq} can be reduced to a generic advection-reaction-diffusion equation:
\begin{equation}\label{dif-drift-absorp-generic}
   Lu=\tau\frac{\partial u}{\partial t} + \underbrace{\vec{\Delta}^e\cdot\nabla u}_{\text{drift/advection}} - \underbrace{\sum_{i,j=1}^d a_{i,j}\frac{\partial^2 u}{\partial x_i \partial x_j}}_{\text{diffusion}} + \underbrace{\tau \mathcal{A}\big(u(x,t),t\big)}_{\text{absorption/reaction}} = 0.
\end{equation}

\begin{remark}
\Cref{dif-drift-absorp-generic} is applicable in the general case of an anisotropic medium with vector $\vec{\Delta}^e(x,t)$ of expected values of free jumps in the $d$ spatial directions, and symmetric matrix $\left(a_{i,j}\right)$ of coefficients 
\begin{equation}
a_{i,j} \triangleq 
\begin{cases}
\frac{1}{2}\sigma_{i,i}^2, &\quad \text{if} \quad i=j,\\
\sigma_{i,j}^2, &\quad \text{if} \quad  i\neq j,
\end{cases}
\end{equation}
which are defined in terms of the corresponding (co-)variances $\sigma_{i,j}$ of the free jump distribution $\varphi$, given in \cref{var-ij}.
\end{remark}

\subsection{Nonlinear PDE under the Einstein paradigm}

For the remainder of this paper, we assume there is no absorption or reaction (meaning no creation or annihilation of particles), so $\mathcal{A}(u)\equiv0$. Next, we specialize the model to a $d$-dimensional domain in $\mathbb{R}^d$ under the assumption that the medium in which the diffusion process occurs is homogeneous and isotropic. By the homogeneity assumption, the frequency distribution $\varphi$ is a scalar function of the vector $\vec{\Delta}^e$, and  the vector of expected values is simply  $\vec{\Delta}^e = (\Delta^e, \hdots, \Delta^e)$. In other words, the expected values of jumps in the random walk are the same in all coordinate directions, and equal to the same given constant value $\Delta^e$. It follows that $a_{i,j}=\tfrac{\sigma^2}{2}\delta_{i,j}$, where $\sigma^2$ is the now-constant variance of the jump distribution, and $\delta_{i,j}$ is the Kronecker-$\delta$ symbol.

Now, we further motivate how the model equation, derived in \cref{sec:setup} under the generalized Einstein paradigm, can be used to obtain a nonlinear parabolic diffusion equation, in non-divergence form, for a Brownian-like process.

\begin{assumption}[Time interval in nonlinear Einstein paradigm]\label{tau}
Under the Einstein para\-digm, $\tau$, $\Delta^e$, and $\varphi$ characterize the Brownian process. In general, they could be functions of both the spatial and time variables, $x$ and $t$, and properties of the medium. But, more importantly, they can be functions of the dependent variable, such as the concentration of particles $u\ge0$ and its derivatives, via a power-law relation (\cref{remark:u-dep,remark:grad-u-dep} motivate this functional form): 
\begin{equation}
    \tau=\frac{\tau_0}{u^{\gamma} |\nabla u|^{\beta}},
\end{equation}
where $\tau_0>0$ is a suitable dimensional consistency constant, and $\gamma,\beta\in\mathbb{R}$ do not have to be integers (though their ranges will be further restricted below for the proofs). Here, the earlier assumption of a homogeneous isotropic medium rules out an explicit dependence of $\tau$ on $x$ or $t$.
\end{assumption}

\begin{remark}[Classical Einstein paradigm]
Observe that $\tau_0$ (\textit{i.e.}, $\tau$ for $\gamma=\beta=0$) in \cref{tau} coincides with the time interval between jumps in the special case of Einstein's classical derivation \cite{Einstein05,Einstein56}.
\end{remark}

\begin{remark}[$u$ dependence of $\tau$]\label{remark:u-dep}
In ionized gases, the mean free path (between collisions) of molecules is directly proportional to a non-integer power of the temperature (see, \textit{e.g.}, Ch.~X\S2 of the book by Zel?dovich and Raizer \cite{Zel67}), with the power depending on the level of ionization. Therefore, kinematics implies that the time interval between collisions is also proportional to the temperature (because the ``agitation'' and kinetic energy of molecules increases with temperature). Translated to the context of concentration and particles: the length of free jumps, and therefore the time during which the free jumps occur, must be \emph{inversely} proportional to some power of the concentration. That is, the more particles there are nearby, the more likely they are to collide, thus reducing the time interval during which the free jumps occur (\textit{i.e.}, the time interval during which the motion is uncorrelated). This physical situation is a special case of \cref{tau} ($\beta=0$).
\end{remark}

\begin{remark}[$\nabla u$ dependence of $\tau$]\label{remark:grad-u-dep}
Similar arguments can also be made for the case of $\beta\ne0$ to justify that the time interval between collisions can also depend upon some (possibly non-integer) power of the gradient's magnitude $|\nabla u|$. For example, Kruckels \cite{Kruckels1973} argued that if a particle has more neighbors on one ``side'' (versus another), then a gradient effect arises because the preferentially large number of particles on one side will make it more likely for a collision to occur. In turn, this again reduces the time interval during which the free jumps occur, along the lines of \cref{remark:u-dep}. The power-law form of this gradient dependence has been further motivated by appealing to Eyring's theory of rate processes \cite{Kruckels1973,Cunningham1980}.
\end{remark}

Under the above assumptions, on a general domain $U \subset \mathbb{R}^d$, \cref{dif-drift-absorp-generic} becomes
\begin{equation}\label{non_diveregent_eq}
   Lu=\tau_0\frac{\partial u}{\partial t}+u^{\gamma}|\nabla u|^{\beta} \Delta^{e} \sum_{i=1}^{d} \frac{\partial u}{\partial x_i}-\frac{\sigma^2}{2} u^{\gamma}|\nabla u|^{\beta}\Delta u = 0 \quad \text{in} \;\; U\times (0,T], \;\; U \subset \mathbb{R}^d
\end{equation}
subject to an initial and boundary condition:
\begin{equation}\label{IBC}
    u(x,t)=h(x,t) \geq 0 \quad \text{ on } \ \Gamma(U_T). \ 
\end{equation}
Above, $\Gamma(U_T)$ is the parabolic boundary of $U_T=U\times (0,T]$.

\Cref{non_diveregent_eq} is sometimes called a \emph{doubly nonlinear} parabolic equation because it degenerates when $u=0$ \emph{and/or} $|\nabla u|=0$.


\section{Maximum principle on a bounded domain}
\label{sec:max_bd}

In this section, we prove a maximum principle for the solutions of \cref{non_diveregent_eq}, following \cite{Ilyin02,Landis,CII20}. In addition to $\mathbb{R}^d$ as above, we also consider the $(d+1)$-dimensional space $\mathbb{R}^{d+1}$, in which the spatial coordinates are augmented by time: $\mathbb{R}^{d+1}\ni (x,t) = (x_1, x_2,\hdots,x_d,t)$. 
Again, let $U \subset \mathbb{R}^{d}$ be bounded, and $t>0$. We define the cylindrical region $U_T \triangleq U\times (0,T] \subset \mathbb{R}^{d+1}$, its parabolic boundary $\Gamma(U_T) \triangleq (U \times \{t=0\})\cup (\partial U \times (0,T])$, and the layer $H\subset \mathbb{R}^{d+1}\cap(0,T]$. 

As above, $\mathcal{C}^{2,1}_{x,t}(U_T)$ denotes the class of continuous functions on $\overline{U_T}$ that have two continuous derivatives in $x$ and one continuous derivative in $t$ inside the domain $U_T$. 
\begin{remark}\label{epsil-sol}
Note that \cref{non_diveregent_eq} degenerates when $u=0$ or $|\nabla u|=0$, therefore its solution $u(x,t)$ does not belong to  $\mathcal{C}^{2,1}_{x,t}(U_T)$, strictly speaking. To prove our result, we introduce a ``viscous'' regularization. Specifically, we regularize the operator $L$ as $L_{\epsilon}$ via
\begin{equation}
L_{\epsilon}u_{\epsilon}=
   \tau_0\frac{\partial u_{\epsilon}}{\partial t}+(u_{\epsilon}+\epsilon)^{\gamma}(|\nabla u_{\epsilon}|^{\beta}+\epsilon) \Delta^{e} \sum_{i=1}^{d} \frac{\partial u_{\epsilon}}{\partial x_i}-\frac{\sigma^2}{2} (u_{\epsilon}+\epsilon)^{\gamma}(|\nabla u_{\epsilon}|^{\beta}+\epsilon)\Delta u_{\epsilon} = 0 
   \;\; \text{in} \;\; U\times (0,T], \;\; U \subset \mathbb{R}^d.
\end{equation}
\end{remark}

In all major estimates for the solution of the initial-boundary-value problem (IBVP) for $L_{\epsilon}u_{\epsilon}=0$ in this article, the constants do not depend on $\epsilon$. All dependence on the regularization parameter $\epsilon$ appears as separate terms in the respective estimates, and they do not depend on the solution $u_{\epsilon}$ or its derivatives. This observation allows us to pass to the limit in the final estimates, and conclude the localization property for the limiting function:
\begin{equation}\label{eps_sol}
u(x,t)=\lim_{\epsilon\to 0}u_{\epsilon}(x,t),
\end{equation}   
which is considered as a weak passage to the limit \cite{CIL92}. Thus, we define the weak viscosity solution from \cref{eps_sol} as a partial limit with respect to the norm that will be induced by \cref{I_n} in \cref{sec:localization}. 
This solution may not be unique, and we will assume \textit{a priori}, in \cref{sec:localization}, that the limiting function $u(x,t)$ has certain properties.

Next, in this section, we prove a maximum principle, which evidently will not depend on the parameter $\epsilon$, therefore the limiting function is bounded. In this article, we assume that this limit exists. Then, the obtained localization property will extend from the  estimates on $u_{\epsilon}$ to the weak solution $u$.

\begin{lemma}[Weak maximum principle]\label{maximum-principle_weak} 
For a given function $u_{\epsilon} \in \mathcal{C}^{2,1}_{x,t}$, if $L_{\epsilon}u_{\epsilon} > 0$ in $U_T$, then $u_{\epsilon} \geq \min_{\Gamma(U_T)} u_{\epsilon}  $ in $U_T$.
\end{lemma}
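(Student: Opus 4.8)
The plan is to argue by contradiction using the classical pointwise extremum test, which is well suited here precisely because $Lu$ is written in non-divergence form in \eqref{non_diveregent_eq}. Set $m \triangleq \min_{\Gamma(U_T)} u$ and suppose, to the contrary, that $u(x_*,t_*) < m$ for some $(x_*,t_*) \in U_T$. Since $u$ is continuous on the compact set $\overline{U_T}$, it attains its global minimum over $\overline{U_T}$ at some point $(x_0,t_0)$, and by assumption this minimum value is strictly below $m$; hence $(x_0,t_0) \notin \Gamma(U_T)$. Thus $(x_0,t_0)$ lies either in the open interior $U \times (0,T)$ or on the top lid $U \times \{T\}$, which is excluded from the parabolic boundary.

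Next I would record the first- and second-order necessary conditions at this minimizer. Because $x_0 \in U$ is an interior spatial minimizer, $\nabla u(x_0,t_0) = 0$ and the spatial Hessian $D^2 u(x_0,t_0)$ is positive semidefinite, so in particular $\Delta u(x_0,t_0) \geq 0$. For the time derivative, if $t_0 \in (0,T)$ then $\partial u / \partial t(x_0,t_0) = 0$, while if $t_0 = T$ then $u(x_0,t) \geq u(x_0,t_0)$ for $t < T$ forces the one-sided inequality $\partial u/\partial t(x_0,t_0) \leq 0$; in either case $\partial u/\partial t(x_0,t_0) \leq 0$.

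I would then substitute these conditions into $Lu$ at $(x_0,t_0)$ and exploit the degeneracy in our favor. Since $\nabla u(x_0,t_0)=0$, every $\partial u/\partial x_i$ vanishes, so the drift term $u^{\gamma} |\nabla u|^{\beta} \Delta^e \sum_{i} \partial u/\partial x_i$ is exactly zero. For the diffusion term $-\tfrac{\sigma^2}{2} u^{\gamma} |\nabla u|^{\beta} \Delta u$, I would use $u \geq 0$ (concentration) together with the admissible ranges of $\gamma$ and $\beta$ to guarantee that the coefficient $u^{\gamma} |\nabla u|^{\beta}$ is nonnegative; combined with $\Delta u \geq 0$ this makes the diffusion term $\leq 0$. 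Hence $Lu(x_0,t_0) = \tau_0\, \partial u/\partial t - \tfrac{\sigma^2}{2} u^{\gamma} |\nabla u|^{\beta} \Delta u \leq 0$, contradicting the hypothesis $Lu > 0$. This contradiction shows no such $(x_*,t_*)$ exists, i.e., $u \geq m$ throughout $U_T$.

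The main obstacle, and the only place the degeneracy genuinely matters, is ensuring the nonlinear coefficient $u^{\gamma} |\nabla u|^{\beta}$ behaves well at the minimizer. When $\beta > 0$ this factor simply vanishes (since $\nabla u = 0$), neutralizing both the drift and diffusion contributions; when $\beta = 0$ one instead relies on $u^{\gamma} \geq 0$ and $\Delta u \geq 0$. The delicate cases are negative exponents: if $\beta < 0$ the factor $|\nabla u|^{\beta}$ is singular exactly where $\nabla u = 0$, so the argument must restrict to $\beta \geq 0$ (and to a range of $\gamma$ for which $u^{\gamma}$ is defined and nonnegative on $u \geq 0$), which I expect is precisely the parameter restriction alluded to after \cref{tau}. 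A secondary point requiring care is the top-lid case $t_0 = T$, handled above via the one-sided time-derivative inequality rather than an equality.
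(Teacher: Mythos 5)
Your proof is correct and follows essentially the same route as the paper's: a contradiction argument at an interior minimizer using $\nabla u = 0$, $\partial u/\partial t \leq 0$, and $\Delta u \geq 0$ to force $Lu \leq 0$ there. You simply spell out more carefully the top-lid case $t_0 = T$ and the behavior of the degenerate coefficient $u^{\gamma}|\nabla u|^{\beta}$ (in particular the need for $\beta \geq 0$), details the paper's terse proof leaves implicit.
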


\begin{proof} 
Let us suppose there is a point $(x_0, t_0) \in U_T$ s.t.\ $\min_{\overline{U_T}}u_{\epsilon}=u_{\epsilon}(x_0, t_0)< \min_{\Gamma(U_T)} u$, then the minimum of $u_{\epsilon}(x, t)$ attained at $(x_0, t_0) \in U_T$, for $x_0 \in U$ and $t_0 \leq T$.
Therefore, at the point $(x_0, t_0)$, $\nabla u_{\epsilon}= 0$, $\frac{\partial u_{\epsilon}}{\partial t} \leq 0$ and $\Delta u_{\epsilon} \geq 0$.

Consequently, $L_{\epsilon}u_{\epsilon} \leq 0$ at the point $(x_0, t_0)$, which is a contradiction. 
\end{proof}

\begin{theorem}[Strong maximum principle]\label{maximum_principle_strong}   
For a given function $u_{\epsilon} \in \mathcal{C}^{2,1}_{x,t}$, if $L_{\epsilon}u_{\epsilon} \geq 0$ in $U_T$, then $u_{\epsilon} \geq \min_{\Gamma(U_T)} u_{\epsilon}$ in $U_T$.
\end{theorem}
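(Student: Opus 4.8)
The plan is to upgrade the weak principle (\cref{maximum-principle_weak}) to the non-strict hypothesis $Lu\ge 0$ by a time-linear perturbation, arguing by contradiction. First I would assume the conclusion fails, so that $\delta := \min_{\Gamma(U_T)} u - \min_{\overline{U_T}} u > 0$, with the global minimum of $u$ attained at some interior point $(x_0,t_0)\in U_T$; if instead it were attained on $\Gamma(U_T)$, there would be nothing to prove.

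Next I would introduce the perturbed function $v := u + \epsilon t$ for a parameter $0<\epsilon<\delta/T$. The term $\epsilon t$ is nonnegative on $\Gamma(U_T)$ but raises interior values by at most $\epsilon T<\delta$, so a short estimate gives $\min_{\overline{U_T}} v \le v(x_0,t_0) < \min_{\Gamma(U_T)} u \le \min_{\Gamma(U_T)} v$. Hence the minimum of $v$ over $\overline{U_T}$ is attained at an interior point $(x_1,t_1)$ with $x_1\in U$ and $t_1\in(0,T]$. At such a point the first- and second-order conditions read $\nabla u(x_1,t_1)=\nabla v=0$, $\Delta u(x_1,t_1)=\Delta v\ge 0$, and (allowing $t_1=T$ as a one-sided condition) $\partial_t v(x_1,t_1)\le 0$, whence $\partial_t u(x_1,t_1)=\partial_t v-\epsilon \le -\epsilon<0$.

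The decisive step, and the place where the argument departs from the textbook linear one, is that I would evaluate the \emph{original} operator $Lu$, not $Lv$, at $(x_1,t_1)$. Because the operator is genuinely nonlinear (the coefficient $u^{\gamma}|\nabla u|^{\beta}$ depends on both $u$ and $\nabla u$), the usual trick of arranging $Lv>0$ and invoking \cref{maximum-principle_weak} breaks down: one does \emph{not} have $Lv = Lu + \tau_0\epsilon$ in general, since the coefficient $(u+\epsilon t)^{\gamma}$ replaces $u^{\gamma}$. What rescues the proof is the degeneracy itself. Since $\nabla u(x_1,t_1)=0$, the advection term $u^{\gamma}|\nabla u|^{\beta}\Delta^{e}\sum_i \partial_{x_i} u$ vanishes identically, and (for the admissible range $\beta\ge 0$, with $u\ge 0$ and $\gamma\ge 0$ so that $u^{\gamma}|\nabla u|^{\beta}\ge 0$) the diffusion term $-\tfrac{\sigma^2}{2}u^{\gamma}|\nabla u|^{\beta}\Delta u$ is $\le 0$, and is in fact exactly $0$ when $\beta>0$. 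Therefore at $(x_1,t_1)$ one obtains $Lu \le \tau_0\,\partial_t u \le -\tau_0\epsilon < 0$, which contradicts $Lu\ge 0$. This contradiction forces $\delta\le 0$, i.e.\ $u\ge \min_{\Gamma(U_T)} u$ throughout $U_T$.

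I expect the main obstacle to be precisely this nonlinearity: one cannot simply apply the weak principle to a perturbed function, and must instead verify that the degenerate advection and diffusion terms drop out at the interior minimum, so that only the strictly signed term $\tau_0\,\partial_t u$ survives. Secondary technical points I would check are the well-definedness and continuity of $u^{\gamma}|\nabla u|^{\beta}$ where $\nabla u=0$ (which is what motivates the parameter restrictions $\beta\ge 0$, $\gamma\ge 0$, together with $u\ge 0$), and the careful handling of the one-sided time derivative in the case where the interior minimum of $v$ occurs on the top face $t_1=T$.
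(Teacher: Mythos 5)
Your proof is correct and rests on the same core device as the paper's: perturb $u$ by a term linear in $t$ ($\epsilon t$ in your notation, $Kt$ in the paper's), locate an interior minimum of the perturbed function, and derive a contradiction. Where you genuinely differ is in how the contradiction is extracted. The paper computes $Lw$ for $w=Kt+u$, asserts $Lw=K+Lu>0$, and then invokes \cref{maximum-principle_weak} applied to $w$. As you correctly observe, the identity $Lw=K+Lu$ is not literally valid for the nonlinear operator, since substituting $w$ would replace $u^{\gamma}$ by $(u+Kt)^{\gamma}$; the paper implicitly freezes the coefficient at $u^{\gamma}|\nabla u|^{\beta}$ (note $\nabla w=\nabla u$, so only the $u^{\gamma}$ factor is at issue), and the weak-principle argument still goes through for that frozen-coefficient operator because its proof only uses nonnegativity of the coefficient at the interior minimum. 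Your version sidesteps the issue entirely by evaluating the \emph{original} $Lu$ at the interior minimum of $v$, where $\nabla u=0$ annihilates the advection term and renders the diffusion term nonpositive, leaving $Lu\le\tau_0\,\partial_t u\le-\tau_0\epsilon<0$. This is the cleaner route for a quasilinear operator and makes the role of the degeneracy explicit, at the cost of not reusing the weak lemma; both arguments are valid under the paper's standing assumptions $u\ge0$, $\gamma>0$, $\beta\ge0$. (A minor bookkeeping point: the paper's ``$Lw=K+Lu$'' should read $\tau_0K+Lu$, which is immaterial to either argument.)
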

\begin{proof}
Consider the function
$w(x, t)= Kt+ u_{\epsilon}(x, t)$ for $(x, t) \in U_T$ and  $t<T$, $K >0$. 
Then for given $u_{\epsilon}$, 
\begin{equation}
\begin{split}
    \tilde{L}_{\epsilon}& w \triangleq \tau_0\frac{\partial w}{\partial t}+ (u_{\epsilon}+\epsilon)^{\gamma}(|\nabla u_{\epsilon}|^{\beta}+\epsilon)\vec{\Delta}^{e} \cdot\nabla w-\frac{\sigma^2}{2}\cdot (u_{\epsilon}+\epsilon)^{\gamma}(|\nabla u_{\epsilon}|^{\beta}+\epsilon)\Delta w\\
    &= K\tau_0+\tau_0\frac{\partial u_{\epsilon}}{\partial t}+(u_\epsilon+\epsilon)^{\gamma}(|\nabla u_{\epsilon}|^{\beta}+\epsilon)\vec{\Delta}^{e} \cdot \nabla u_{\epsilon}-\frac{\sigma^2}{2}\cdot (u_{\epsilon}+\epsilon)^{\gamma}(|\nabla u_{\epsilon}|^{\beta}+\epsilon)\Delta u_{\epsilon}\\
    &= K\tau_0+L_{\epsilon}u_{\epsilon} \\
    &>0.
\end{split}
\end{equation}
Since $Lu_{\epsilon} \geq 0$, it follows $L_{\epsilon}w \geq K\tau_0>0$, therefore, we obtain $w(x, t) \geq \min_{\Gamma(U_T)} w$ in $U_T$, for some $K\tau_0>0$, by \cref{maximum-principle_weak}.
Consequently, $K t + u_{\epsilon}(x,t) \geq \min_{\Gamma(U_T)}u_{\epsilon}$ in $U_T$, which implies $K T + u_{\epsilon}(x,t) \geq \min_{\Gamma(U_T)}u_{\epsilon}$ in $U_T$ for $K$. Taking $K\to0$, $u_{\epsilon}\geq \min_{\Gamma(U_T)} u_{\epsilon}$. 
\end{proof}

\section{Proof of the localization property (finite speed of propagation)}
\label{sec:localization}

The standard method \cite{Kamin1988} of proving the finite speed of propagation (\emph{localization property} of solutions) is based on the construction of an appropriate ``Barenblatt-type'' barrier function, then applying the maximum principle (\cref{sec:max_bd}). This method has advantages for equations in non-divergence form. The proof was revisited by Tedeev and Vespri \cite{ves-ted} in the framework of equations in divergence form by applying the powerful machinery of De Giorgi \cite{DiBenedetto93} for estimation of the oscillation of the solution via integral fluxes, obviating the need to construct a barrier function.
 
In this section, motivated by the approach of Tedeev and Vespri \cite{ves-ted}, we prove the localization property of solutions using the De Giorgi--Ladyzhenskaya iteration procedure \cite{DGV}, for the non-divergence form \cref{non_diveregent_eq}. Let $ u_{\epsilon}\geq 0$ be a classical solution of the following IBVP:
\begin{align}
&\frac{\partial u_{\epsilon}}{\partial t} + \frac{\Delta^{e}}{a} (u_{\epsilon}+\epsilon)^{\gamma}(|\nabla u_{\epsilon}|^{\beta}+\epsilon) \sum_{i=1}^{d} \frac{\partial u_{\epsilon}}{\partial x_{i}}-\frac{\sigma^2}{2a} (u_{\epsilon}+\epsilon)^{\gamma}(|\nabla u_{\epsilon}|^{\beta}+\epsilon) \Delta u_{\epsilon} = 0,
\label{MODEL EQ}\\
&u_\epsilon(x,0) = u_0(x)+\epsilon,\label{In-data-1}\\
&u_{\epsilon}(x,t)|_{\partial U \times (0,T)} = \epsilon.\label{BC-1}
\end{align}
Here, $ |\nabla u | = \sqrt{\sum_{i=1}^{d} |\partial u /\partial {x_i} |^{2}}$. Observe that \cref{MODEL EQ} is \cref{non_diveregent_eq} with $a=\tau_0$. In this section, we use $\tau$ as a generic (``dummy'') integration variable, not to be confused with $\tau$ from \cref{sec:paradigm}. 

Assume that $\supp u_0 \subset B_{R_{0}}(0)$ and $ U \subset \mathbb{R}^d$ are bounded, and consider the sequence of $ r_n =
2r\left(1-1/2^{n+1}\right)$ for $ n=0,1,2,\hdots$ with $r > 2R_0$. Let $ \overline{r}_n  = (r_n +r_{n+1})/{2}$,  $U_n = U \setminus B_{r_n} (0)$, and $\overline U_n = U \setminus B_{\overline r_n} (0)$.  We define $\eta_{n}$ to be a sequence of cut-off functions s.t.\
\begin{equation}
    \eta_n(x) = 
    \begin{cases} 
    0, &\text{ for } x\in  B_{r_n}(0),\\
    1, &\text{ for }  x\in \overline U_n,\\
    | \nabla \eta_n| \leq \frac{c2^n}{r}, &\text{ otherwise.} 
    \end{cases}
\end{equation}
Here, $c>0$ is a constant.

\begin{lemma}\label{main-ineq-u}
Assume that $\gamma > 0$, $\beta \geq 0$, and ${\sigma^2} > 0$. Let $\theta \geq 1$ and $\Delta^{e},p \in \mathbb{R}$ be s.t.\
\begin{align}
    \beta +2 \leq p <  \left(\frac{\theta + \gamma}{\beta+1}\right) - \frac{2|\Delta^{e}|}{\sigma^2}. \label{p-bounds}
\end{align}
Then, for any $t>0$, there exist constants $c_0$ and $\nu$ such that
\begin{equation}
 \sup_{0 < \tau < t }\int_{U_{n+1}} u_{\epsilon} ^{\theta +1} \,dx  + C \int_{0}^{t} \int_{U_{n+1}} u_{\epsilon}^{\theta +\gamma-1}  |\nabla u_{\epsilon}|^{\beta+2} \,dx d\tau \leq D_n \int_{0}^{t} \int_{U_{n}} u_{\epsilon}^{\theta +\gamma+\beta+1} \,dx d\tau +c_0\epsilon^{\nu}t ,
\end{equation}
where we have introduced the definitions
\begin{align}\label{C-defin}
   C &\triangleq \frac{(\theta +1)}{2a}\left[ {\sigma^2} \left(\frac{\theta + \gamma}{1+\beta}\right) -{2|\Delta^{e}|} -  {{\sigma^2} p}\right],\\  
   D_{n} &\triangleq \frac{(\theta +1)}{2a}\left[ {{\sigma^2} p}\left(\frac{c 2^{n}}{r}\right)^{\beta +2} + 2 {|\Delta^{e}|} \right]. 
   \label{D-defin}
\end{align} 
\end{lemma}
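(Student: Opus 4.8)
The plan is to run a Caccioppoli-type energy estimate in the spirit of De Giorgi--Ladyzhenskaya. First I would test \eqref{MODEL EQ} against the nonnegative function $(\theta+1)\,u^{\theta}\eta_n^{p}$ and integrate over $U\times(0,t)$. Since $\eta_n$ is time-independent and $(\theta+1)u^{\theta}\,\partial_t u=\partial_t(u^{\theta+1})$, the parabolic term contributes $\int_U u^{\theta+1}\eta_n^{p}\,dx$ evaluated at the upper time limit minus its value at $t=0$. Because $\supp u_0\subset B_{R_0}(0)$ while $r_n\ge r>2R_0$, the cut-off $\eta_n$ vanishes on $\supp u_0$, so the $t=0$ term drops out. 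Running the same computation on every subinterval $(0,\tau')\subseteq(0,t)$ and discarding the (nonnegative) diffusion integral yields a uniform bound on $\int_U u^{\theta+1}\eta_n^{p}\,dx$; restricting the spatial integral to $U_{n+1}$, where $\eta_n\equiv1$, this produces the $\sup_{0<\tau<t}$ term on the left-hand side, while retaining the diffusion integral produces the coercive space--time term.

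Next I would integrate the second-order contribution $-\tfrac{\sigma^2}{2a}\int\!\!\int u^{\theta+\gamma}|\nabla u|^{\beta}\eta_n^{p}\,\Delta u$ by parts in $x$. The boundary contributions vanish because $\eta_n\equiv0$ near the inner sphere $\partial B_{r_n}(0)$ and $u=0$ on $\partial U$. Moving one derivative off $\Delta u$ generates three pieces: the coercive integral $(\theta+\gamma)\int\!\!\int u^{\theta+\gamma-1}|\nabla u|^{\beta+2}\eta_n^{p}$, a cut-off cross term $p\int\!\!\int u^{\theta+\gamma}|\nabla u|^{\beta}\eta_n^{p-1}\,\nabla\eta_n\!\cdot\!\nabla u$, and a directional second-derivative term proportional to $\beta\int\!\!\int u^{\theta+\gamma}|\nabla u|^{\beta-2}(\nabla u)^{\top}\nabla^2 u\,\nabla u\,\eta_n^{p}$. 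The factor $(\theta+\gamma)/(1+\beta)$ in the constant $C$ of \eqref{C-defin} is precisely the signature of replacing $|\nabla u|^{\beta}\Delta u$ by $\tfrac{1}{1+\beta}\nabla\!\cdot\!(|\nabla u|^{\beta}\nabla u)$: in one spatial dimension this is the exact identity $|\nabla u|^{\beta}u_{xx}=\tfrac{1}{1+\beta}\partial_x(|\nabla u|^{\beta}u_x)$, which folds the directional term into the coercive one and reduces its coefficient from $\theta+\gamma$ to $(\theta+\gamma)/(1+\beta)$.

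The first-order (drift) term I would estimate by Cauchy--Schwarz, $\big|\sum_i \partial u/\partial x_i\big|\le\sqrt d\,|\nabla u|$, followed by Young's inequality splitting $u^{\theta+\gamma}|\nabla u|^{\beta+1}$ into a multiple of the coercive integrand $u^{\theta+\gamma-1}|\nabla u|^{\beta+2}$ and a remainder $\propto u^{\theta+\gamma+\beta+1}$; this accounts for the $-2|\Delta^{e}|$ subtracted in $C$ and the $+2|\Delta^{e}|$ added in $D_n$ of \eqref{D-defin}. The cut-off cross term is treated the same way: bounding $|\nabla\eta_n\!\cdot\!\nabla u|\le|\nabla\eta_n||\nabla u|$ and applying Young's inequality with conjugate exponents $\tfrac{\beta+2}{\beta+1}$ and $\beta+2$ yields the coercive integrand (the source of the $-\sigma^2 p$ in $C$) plus a remainder $\propto\int\!\!\int u^{\theta+\gamma+\beta+1}|\nabla\eta_n|^{\beta+2}\eta_n^{p-\beta-2}$. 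Here the lower bound $p\ge\beta+2$ in \eqref{p-bounds} ensures $\eta_n^{p-\beta-2}\le1$, while $|\nabla\eta_n|\le c2^{n}/r$ and $\supp\nabla\eta_n\subset U_n$ produce exactly the factor $(c2^{n}/r)^{\beta+2}$ in $D_n$ (the Young constant being absorbed into $c$). Collecting the coercive contributions leaves the net coefficient $C$, which is strictly positive precisely when $p$ lies below the upper bound in \eqref{p-bounds}.

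The hard part will be the directional second-derivative term $\beta\int\!\!\int u^{\theta+\gamma}|\nabla u|^{\beta-2}(\nabla u)^{\top}\nabla^2 u\,\nabla u\,\eta_n^{p}$. Unlike in one dimension, its integrand is not sign-definite, and it cannot be eliminated by further integration by parts: every such manipulation merely reproduces the single linear relation tying it to the coercive integral and the cut-off cross term, so its value is not pinned down by those two alone. Showing that this term combines to leave exactly the reduced coefficient $(\theta+\gamma)/(1+\beta)$---that is, controlling the gradient-direction Hessian by exploiting the radial structure of the cut-offs $\eta_n$ together with the one-dimensional identity above---is the step I expect to demand the most care, and it is the crux on which the value of the coercivity constant $C$ in \eqref{C-defin} ultimately rests.
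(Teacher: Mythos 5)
Your proposal retraces the paper's proof essentially step for step: test \eqref{MODEL EQ} with $u^{\theta}\eta_n^{p}$ and integrate over $U\times(0,t)$; use $\supp u_0\subset B_{R_0}(0)\subset B_{r_n}(0)$ to kill the $t=0$ contribution of the time term; integrate the second-order term by parts; apply Young's inequality with conjugate exponents $\tfrac{\beta+2}{\beta+1}$ and $\beta+2$ to both the drift term and the cut-off cross term; use $p\ge\beta+2$ together with $0\le\eta_n\le1$ and the upper bound in \eqref{p-bounds} to keep the net coercivity coefficient $C$ positive; and read off $D_n$ from $|\nabla\eta_n|\le c2^{n}/r$ with $\supp\nabla\eta_n\subset U_n\setminus\overline U_n$. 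All of this matches the paper's computation. (One small mismatch in constants: you correctly insert $\bigl|\sum_i\partial u/\partial x_i\bigr|\le\sqrt{d}\,|\nabla u|$, which for $d>1$ would make the drift contribution $2\sqrt{d}\,|\Delta^{e}|$ rather than the $2|\Delta^{e}|$ appearing in \eqref{C-defin}--\eqref{D-defin}; the paper silently drops the $\sqrt d$.)

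The one step you leave unresolved --- the directional Hessian term $\beta\iint u^{\theta+\gamma}|\nabla u|^{\beta-2}(\nabla u)^{\top}\nabla^2u\,\nabla u\,\eta_n^{p}$ --- is not resolved by the paper either. The paper's starting display \eqref{part-int} is written as though
\begin{equation*}
|\nabla u|^{\beta}\Delta u=\frac{1}{1+\beta}\,\nabla\cdot\bigl(|\nabla u|^{\beta}\nabla u\bigr),
\end{equation*}
which is an identity only for $d=1$; in general the divergence on the right equals $|\nabla u|^{\beta}\Delta u$ plus exactly the non-sign-definite term you isolate, and no argument is given for absorbing it into the coercive integral. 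So you have not missed a trick that the paper supplies --- there is none for that term. Both your argument and the paper's are complete in one spatial dimension (where the identity holds exactly and the stated constants follow), but for $d\ge2$ your proposal, as written, has a genuine gap, and it is the same gap as in the published proof. Closing it would require either restricting the lemma to $d=1$, or rewriting $u^{\theta+\gamma}|\nabla u|^{\beta}\Delta u$ as a divergence plus a commutator and producing an estimate for the commutator compatible with the coercive term; neither is done in the paper.
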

\begin{proof}
Let $ t \leq T$, multiply both sides of \cref{MODEL EQ} by $\eta_n^p u_{\epsilon}^{\theta}$ and integrate over $U_t \triangleq U\times(0,t)$ to obtain:
\begin{equation}
\frac{1}{\theta+1}\int_{U_t} \eta_n^p u_{\epsilon} ^{\theta +1} \,dx +
\frac{\Delta^{e}}{a}\iint_{U_t} \eta_n^p u_{\epsilon}^{\theta +\gamma} |\nabla u_{\epsilon}|^\beta \sum_{i=1}^{d} \frac{\partial u_{\epsilon}}{\partial x_{i}}  \,dx d\tau
+\frac{\sigma^2}{{2a}(\beta +1)}\iint_{U_t}
\nabla\left(\eta_n^p u_{\epsilon}^{\theta +\gamma}\right)|\nabla u_{\epsilon}|^\beta \nabla u_{\epsilon}  \,dx d\tau =  0.
\label{part-int}
\end{equation}
Compute $\nabla\left(\eta_n^p u_{\epsilon}^{\theta +\gamma}\right) $, then \cref{part-int} yields 
\begin{multline}
\frac{1}{\theta+1}\int_{U_t} \eta_n^p u_{\epsilon}^{\theta +1} \,dx -\frac{|\Delta^{e}|}{a}\iint_{U_t} \eta_n^p u_{\epsilon}^{\theta +\gamma} |\nabla u_{\epsilon}|^{\beta+1} \,dx d\tau  
+ \frac{\sigma^2}{2a}\left(\frac{\theta +\gamma}{\beta +1}\right)\iint_{U_t} \eta_n^{p} |\nabla u_{\epsilon}|^{\beta+2}  u_{\epsilon}^{\theta +\gamma-1}  \, dx d\tau \\
\leq
{\frac{\sigma^2}{2a} p}\iint_{U_t} \eta_n^{p-1}| \nabla \eta_n| |\nabla u_{\epsilon}|^{\beta+1} u_{\epsilon}^{\theta +\gamma} \, dx d\tau .
\label{E6}
\end{multline}
Apply Young's inequity:
\begin{equation}
     \eta_n^{p-1}{}| \nabla \eta_n| |\nabla u_{\epsilon}|^{\beta+1} u_{\epsilon}^{\theta +\gamma} 
     \leq 
     \eta_n^{\frac{(p-1)(\beta+2)}{\beta+1}} |\nabla u_{\epsilon}|^{\beta+2}  u_{\epsilon}^{\theta +\gamma-1} + | \nabla \eta_n|^{\beta+2} u_{\epsilon}^{\theta +\gamma+\beta+1}.
\end{equation}
Note that
\begin{equation}
     |\nabla u_{\epsilon}|^{\beta+1} u_{\epsilon}^{\theta +\gamma}
     \leq 
      |\nabla u_{\epsilon}|^{\beta+2}  u_{\epsilon}^{\theta +\gamma-1} +  u_{\epsilon}^{\theta +\gamma+\beta+1}.
\end{equation}
Then, the estimate~\eqref{E6} becomes
\begin{multline}
\frac{1}{\theta+1} \int_{U_t} \eta_n^p u_{\epsilon} ^{\theta +1} \, dx - \frac{|\Delta^{e}|}{a}\iint_{U_t} \eta_n^p u_{\epsilon}^{\theta +\gamma-1} |\nabla u_{\epsilon}|^{\beta+2}  +  \eta_n^p u_{\epsilon}^{\theta +\gamma + \beta+1} \, dx d\tau  +
\frac{\sigma^2}{2a}\left(\frac{\theta +\gamma}{\beta +1}\right)\iint_{U_t} \eta_n^{p} |\nabla u_{\epsilon}|^{\beta+2}  u_\epsilon^{\theta +\gamma-1}  \, dx d\tau  \\
\leq
{\frac{\sigma^2}{2a} p}
\iint_{U_t} \eta_n^{{(p-1)}({\frac{\beta+2}{\beta + 1}})} u_{\epsilon}^{\theta +\gamma-1}  |\nabla u_{\epsilon}|^{\beta+2}   +   u_{\epsilon}^{\theta +\gamma+\beta+1} |\nabla \eta_n|^{\beta +2} \, dx d\tau .
\label{Rearr}
\end{multline}
Rearranging the above inequality, we obtain 
\begin{equation}
    \left[  \frac{\sigma^2}{2a}\left(\frac{\theta +\gamma}{\beta +1}\right) -\frac{|\Delta^{e}|}{a} \right] \eta_n^{p} > {\frac{\sigma^2}{2a} p} \eta_n^{{(p-1)}\left({\frac{\beta+2}{\beta + 1}}\right)} 
\end{equation} 
by \cref{p-bounds}. Note that $U_{n+1} \subset \overline{U}_{n} \subset U_{n} \subset U$. So, the inequality \eqref{Rearr} becomes 
\begin{multline}
\frac{1}{\theta+1} \int_{U_{n+1}}   u_{\epsilon}^{\theta +1} \, dx +  \int_{0}^{t}\int_{U_{n+1}}
\left[ \frac{\sigma^2}{2a}\left(\frac{\theta +\gamma}{\beta +1}\right) -\frac{|\Delta^{e}|}{a}  -  {\frac{\sigma^2}{2a} p}  \right]   u_{\epsilon}^{\theta +\gamma-1}  |\nabla u_{\epsilon}|^{\beta+2} \, dx d\tau \\
\leq
{\frac{\sigma^2}{2a}}p\iint_{U_{n} \setminus {\overline{U}_{n}}}  u_{\epsilon}^{\theta +\gamma+\beta+1}\left(\frac{c 2^{n}}{r}\right)^{\beta +2} \, dx d\tau
+ 
{\frac{|\Delta^{e}|}{a}}\iint_{{U}_{n}}  u_{\epsilon}^{\theta +\gamma + \beta + 1} \, dx d\tau.
\end{multline}
Thus, we have
\begin{equation}
 \sup_{0 < \tau < t }\int_{U_{n+1}} u_{\epsilon} ^{\theta +1} \,dx  +  C \int_{0}^{t} \int_{U_{n+1}} u_{\epsilon}^{\theta +\gamma-1}  |\nabla u_{\epsilon}|^{\beta+2} \, dx d\tau
\leq 
D_{n} \int_{0}^{t} \int_{U_{n}} u_{\epsilon}^{\theta +\gamma+\beta+1} \, dx d\tau.
\label{EQ22}
\end{equation} 
\end{proof}

Now, let us define the mapping
\begin{equation}
 w \triangleq  u_{\epsilon}^{{(\theta +\gamma + \beta + 1)}/{(\beta +2)}},
 \label{w - u}
\end{equation}
and, for any $n = 0,1,2,\hdots$, define 
\begin{equation}\label{I_n}
I_{n} (t) \triangleq  \sup_{0 < \tau < t } \int_{U_{n+1}} w^{q} \,dx  + \int_{0}^{t} \int_{U_{n+1}} |\nabla w|^{\beta+2} \, dx d\tau,
\end{equation}
where $q \triangleq{(\theta +1)(\beta +2)}/{(\theta +\gamma + \beta + 1)}$. 
\begin{lemma}[Iteration lemma]\label{lad-lemma1}
Let $u_{\epsilon}(x,t)\geq 0$ be a classical solution of IBVP \eqref{MODEL EQ}--\eqref{BC-1}. Then $ \exists \;  C_L > 0$,  $b_L > 1$, $c$, $a>0$ and $\nu>0$ s.t.\ for given $n=1,2,\hdots $,
\begin{equation}\label{Lad-itn-ineq}
I_{n}^{\epsilon}(t) \leq t^{1-\zeta} C_L b^{(n-1)}_L \left(I_{n-1}^{\epsilon}(t)\right)^{1+\epsilon_0}+c\epsilon^\nu t^a,
\end{equation}
where
\begin{align}\label{epsilon-0}
  \epsilon_0 &\triangleq (1-\zeta) \left( \frac{\beta+2}{q} -1\right),\\
    \label{zeta}
    \zeta &\triangleq \frac{\gamma + \beta}{\gamma + \beta +d(\beta +2)(\theta +1)}.   
\end{align}
\end{lemma}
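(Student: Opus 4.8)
The plan is to turn the energy estimate of \cref{main-ineq-u} into a self-improving recursion by rewriting it through the substitution \eqref{w - u} and then interpolating its right-hand side by a parabolic Gagliardo--Nirenberg--Sobolev inequality together with H\"older's inequality in time.

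First I would recast \eqref{EQ22} in terms of $w$. Setting $m \triangleq (\theta+\gamma+\beta+1)/(\beta+2)$, a direct computation gives the three identities $w^q = u^{\theta+1}$ (because $mq=\theta+1$), $|\nabla w|^{\beta+2} = m^{\beta+2}\,u^{\theta+\gamma-1}|\nabla u|^{\beta+2}$ (because $(m-1)(\beta+2)=\theta+\gamma-1$), and $w^{\beta+2}=u^{\theta+\gamma+\beta+1}$. Hence the left-hand side of \eqref{EQ22} is comparable, up to the fixed positive constants $C$ from \eqref{C-defin} and $m^{\beta+2}$, to $I_n(t)$ from \eqref{I_n}, while its right-hand side is $D_n\iint_{U_n}w^{\beta+2}\,dx\,d\tau$. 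Because the cut-off bound $|\nabla\eta_n|\le c2^n/r$ makes the leading term of \eqref{D-defin} proportional to $2^{n(\beta+2)}$, we have $D_n\le C_0\,b^{\,n}$ with $b\triangleq 2^{\beta+2}>1$ and $C_0$ depending only on $\sigma^2,a,p,|\Delta^e|,c,r$. Thus \cref{main-ineq-u} yields
\begin{equation*}
I_n(t)\le C\,b^{\,n}\int_0^t\!\!\int_{U_n} w^{\beta+2}\,dx\,d\tau,
\end{equation*}
and the factor $b^{\,n}$ will supply $b_L^{\,n-1}$ in \eqref{Lad-itn-ineq}.

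The core step is then to bound $\iint_{U_n}w^{\beta+2}$ by a superlinear power of $I_{n-1}(t)$. Reading \eqref{I_n} with $n$ replaced by $n-1$, the two quantities controlled by $I_{n-1}$ are $\sup_{0<\tau<t}\int_{U_n}w^q\,dx$ and $\iint_{U_n}|\nabla w|^{\beta+2}\,dx\,d\tau$. I would interpolate $w$ in the spatial variable at each fixed time, controlling $\|w\|_{L^{\beta+2}}$ by $\|\nabla w\|_{L^{\beta+2}}$ and $\|w\|_{L^q}$; after raising to the power $\beta+2$ and integrating in $\tau$, H\"older's inequality in time converts $\int_0^t\|\nabla w\|_{L^{\beta+2}}^{(\beta+2)\zeta}\,d\tau$ into $\big(\iint|\nabla w|^{\beta+2}\big)^{\zeta}t^{1-\zeta}$, which is the source of the factor $t^{1-\zeta}$. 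The outcome has the schematic form
\begin{equation*}
\int_0^t\!\!\int_{U_n}w^{\beta+2}\,dx\,d\tau \le C\,t^{1-\zeta}\Big(\sup_{0<\tau<t}\int_{U_n}w^q\,dx\Big)^{\alpha}\Big(\int_0^t\!\!\int_{U_n}|\nabla w|^{\beta+2}\,dx\,d\tau\Big)^{\zeta},
\end{equation*}
with $\alpha=\frac{\beta+2}{q}(1-\zeta)$, so that the total exponent is $\alpha+\zeta$ and both factors are $\le I_{n-1}$. The constants \eqref{epsilon-0}--\eqref{zeta} are precisely those for which this interpolation closes: from $q$ in \eqref{I_n} one checks $\frac{\beta+2}{q}-1=\frac{\gamma+\beta}{\theta+1}$, whence \eqref{epsilon-0} reads $\epsilon_0=(1-\zeta)\frac{\gamma+\beta}{\theta+1}$, and a short algebra confirms $\alpha+\zeta=1+\epsilon_0$. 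Combining the two displays, using $\sup_\tau\int_{U_n}w^q$, $\iint_{U_n}|\nabla w|^{\beta+2}\le I_{n-1}$, and absorbing the fixed constants into $C_L$ and $b_L=b$ gives \eqref{Lad-itn-ineq}.

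The main obstacle is the interpolation inequality itself, for two reasons. First, the Sobolev embedding demands that $w$ vanish on the boundary of the region where it is applied; this is exactly what the cut-off functions $\eta_n$ provide, since $\eta_n w$ vanishes on the inner sphere $\partial B_{r_n}(0)$ and $w$ already vanishes on $\partial U$ by \eqref{BC-1}, so that $\eta_n w$ may be extended by zero to $\mathbb{R}^d$. The delicate bookkeeping is to insert the cut-off, absorb the resulting $|\nabla\eta_n|$ terms back into the energy already counted in $I_{n-1}$, and keep the nested inclusions $U_{n+1}\subset\overline U_n\subset U_n$ aligned so that only $I_{n-1}$ (and not an earlier level) appears on the right. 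Second, the displayed interpolation is \emph{not} invariant under spatial dilations, so it must genuinely exploit the boundedness of $U$: the fixed length scale $r$ and the measure $|U|$ enter the embedding constant through a Poincar\'e-type estimate, and it is via this mechanism that the spatial dimension $d$ appears in \eqref{zeta}. Verifying that $d$ enters with exactly the weight producing \eqref{zeta}, while the remaining $r$- and $|U|$-dependent factors are $n$-independent and absorbable into $C_L$, is the crux of the argument; once this inequality is secured, the recursion \eqref{Lad-itn-ineq} is immediate.
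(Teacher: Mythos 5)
Your proposal follows essentially the same route as the paper's proof: rewrite the energy estimate \eqref{EQ22} through the substitution \eqref{w - u} so that its left side becomes $I_n$ up to the constant $K=\min\{1,C[q/(\theta+1)]^{\beta+2}\}$, bound $D_n\lesssim 2^{n(\beta+2)}$ to extract $b_L=2^{\beta+2}$, and close the recursion by the Gagliardo--Nirenberg--Sobolev interpolation in space followed by H\"older in time, with the identical exponent bookkeeping $\zeta+\tfrac{(1-\zeta)(\beta+2)}{q}=1+\epsilon_0$. The one point you flag as the crux --- that the interpolation with exponent $\zeta$ as in \eqref{zeta} is not dilation-invariant for $d>1$ and must lean on the boundedness of the domain and the cut-offs --- is a genuine subtlety, but the paper's proof does not elaborate on it either, simply invoking the inequality with a constant $c_G$.
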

\begin{proof}
The inequality \eqref{EQ22} becomes
\begin{equation}
\sup_{0 < \tau < t }\int_{U_{n+1}} w^{q} \,dx  +  \int_{0}^{t} \int_{U_{n+1}} |\nabla w|^{\beta+2} \,dx d\tau
\leq 
\frac{D_n}{K} \int_{0}^{t} \int_{U_{n}} w^{\beta +2} \,dx d\tau, \label{bridge}
\end{equation}
where  $ K = \min\left\{1,C [q/(\theta +1)]^{\beta +2}\right\}$.

Applying the Gagliardo--Nirenberg--Sobolev inequality on the function $w(x,t)$, one can obtain
\begin{equation}
\int_{U_{n}} w^{\beta+2} \,dx 
 \leq c_G \left(\, \int_{U_{n}} |\nabla w|^{\beta+2} \,dx \right)^{\zeta}\left(\, \int_{U_{n}} w^{q} \,dx\right)^\frac{(1-\zeta)(\beta+2)}{q}
 \label{N-G}
\end{equation}
for some constant $c_G$.
Integrating the above inequality over time, we get
\begin{equation}
\int_{0}^{t}
\int_{U_{n}} w^{\beta+2} \,dx d\tau
\leq c_G \int_{0}^{t} \left(\, \int_{U_{n}} |\nabla w|^{\beta+2} \,dx \right)^{\zeta}d\tau
\left(\sup_{0 < \tau < t } \int_{U_{n}} w^{q} \,dx\right)^\frac{(1-\zeta)(\beta+2)}{q}.
\end{equation}
By applying H\"{o}lder's inequality in the last result, we have
\begin{equation}
\int_{0}^{t}\int_{U_{n}} w^{\beta+2} \,dx d\tau
\leq
c_G t^{1-\zeta} \left( \int_{0}^{t} \int_{U_{n}} |\nabla w|^{\beta+2} \,dxd\tau \right)^{\zeta} 
\left(\sup_{0 < \tau < t } \int_{U_{n}} w^{q} \,dx\right)^\frac{(1-\zeta)(\beta+2)}{q}.
\label{bridge2}
\end{equation}
Using \cref{bridge2} in \cref{bridge}, the quantity defined in \cref{I_n} is bounded as 
\begin{equation}\label{pre-ite}
I_{n}(t) \leq 
\frac{D_{n}}{K}
c_G t^{1-\zeta} I_{n-1}^{\zeta + \frac{(1-\zeta)(\beta+2)}{q}}. 
\end{equation}
Note that ${\zeta + \frac{(1-\zeta)(\beta+2)}{q}} = 1 + \epsilon_0$. Moreover, by \cref{p-bounds}, we obtain
\begin{equation}\label{N-ineq}
D_n \leq (\theta +1) \left[\frac{|\Delta^{e}|}{a} + \left(\frac{c}{2R_{0}} \right) \frac{\sigma^2}{2a} (\theta +\gamma)\right] 2^{{n\beta}+2n} .
\end{equation}
Therefore, from \cref{pre-ite}, we have the following iterative relation
\begin{equation}\label{ite-sheme}
    I_{n} \leq t^{1-\zeta}C_Lb^{n-1}_L I_{n-1}^{1+\epsilon_0}
\end{equation}
with $C_L=\left\{ (\theta +1) \left[\frac{|\Delta^{e}|}{a} + \left(\frac{c}{2R_{0}} \right) \frac{\sigma^2}{2a} (\theta +\gamma)\right]\frac{c_G}{K} 2^{{\beta}+2}\right\}$ and $b_L =2^{\beta+2}$.
\end{proof}

Finally, the main inequality of \cref{lad-lemma1} follows from \cref{ite-sheme}. 
It is not difficult to show that $\exists~\mu>0$ depending on the parameters of the model s.t.\ for the integral
\begin{equation}
    \tilde{I}_n^{\epsilon}=\sup_{0 < \tau < t } \int_{U_{n+1}} u_{\epsilon}^{\mu} \,dx,
\end{equation}
the corollaries follow.
\begin{corollary}
There exist constants $\nu>0$, $\mu>0,$ $a$, and $c$ independent from $\epsilon$ s.t.\ for any given $n$,
\begin{equation}
\tilde{I}_n^{\epsilon}\leq b^n t^{\epsilon_0} \left(\tilde{I}_{n-1}^{\epsilon}\right)^{1+\epsilon_0} + c t^{a} \epsilon^{\nu}. 
\end{equation}
\end{corollary}
\begin{corollary}
Let $u$ be a weak viscosity solution in the sense of weak convergence of the sequence regularized solutions $u_{\epsilon}$ and
\begin{equation}
\tilde{I}_n=\sup_{0 < \tau < t }\int_{U_{n+1}} u^{\mu} \,dx,
\end{equation}
then
\begin{equation}
\tilde{I}_n\leq b^nt^{\epsilon_0}\left(\tilde{I}_{n-1}\right)^{1+\epsilon_0}.
\end{equation}
\end{corollary}

Finally, via the Ladyzhenskaya iterative Lemma \cite{LAU}, the following theorem is established.
\begin{theorem}\label{In_theorem}
Assume the weak viscosity solution  $u(x,t)$ is  s.t.\
\begin{equation}\label{I_oassump}
\tilde{I}_{0}(T) \leq  C_L^{-\frac{1}{\epsilon_0}} 2^{-\left(\frac{\beta+2}{\epsilon_0^2}\right)} T^{-\left(\frac{\theta+1}{\gamma+\beta}\right)} .
\end{equation}
Then,
\begin{equation}\label{In-to-0}
\tilde{I}_{n}(T)\rightarrow 0 \text{ as } n\rightarrow \infty.
\end{equation}
\end{theorem}

\begin{corollary}[Localization property of solutions]
Since $U _{n+1}\subset U_{n}$, it follows from \cref{In_theorem} and the maximum principle that, for any initial data with compact support in the ball $B_{2r}(0)$, there exists $T$ s.t.\ the weak viscosity solution
\begin{equation} 
u (x,t)= 0 \quad \text{ a.e. } \quad \text{ in } \quad  U\setminus{B_{2r}(0)} \quad  \text{ for any } \quad   t\leq T.
\end{equation}
\end{corollary} 

\section{Mapping of the non-divergence form equation to divergence form}
\label{sec:mapping}

The physical models in which nonlinear degenerate diffusion equations arise, which were discussed in \cref{sec:intro}, generically lead to \emph{divergence form} equations. On the other hand, however, the Einstein paradigm, introduced in \cref{sec:paradigm}, generically leads to a PDE in non-divergence form. To connect these two formulations, in this section, we present a transition formula between them, which is often convenient in the analysis of degenerate parabolic PDE \cite{Aronson1983}.

As it was pointed out in \cref{sec:intro}, the solution of the degenerate parabolic PDE arising from the Einstein paradigm for Brownian motion may not be smooth. So, our estimates and localization property did not rely on smoothness. To be clear, however, we  worked with the so-called viscosity solution \cite{CIL92}, and obtained a  qualitative result. In this section, we are assuming that the solution of both the divergent and non-divergent form equation is smooth enough to prove a mapping formula between them. We believe that it will not difficult to show that the weak viscosity solution will obey such a mapping as well. However, this is not a result needed for the upcoming illustrations of the finite speed of propagation (localization) property, so we leave this detail for future work.

Since all our semi-analytic solutions and numerical simulations, which follow in \cref{sec:numerical}, will be performed on a 1D spatial domain without drift, we henceforth assume that $x\in\mathbb{R}$ and $\Delta^e=0$. The key mathematical result of this section is to connect solutions of a generalized degenerate Einstein equation in non-divergence form \cref{non-div-eq} to solutions of a corresponding \cref{div-eq} in divergence form.

\begin{theorem}[Mapping of solutions]\label{mapping-theorem}
Let $0\leq \gamma<1.$ Assume that  $u(x,t)\in \mathcal C^{2,1}_{x,t}$, $u\geq 0$, and it solves
\begin{equation}\label{non-div-eq}
Lu=\frac{\partial u}{\partial t}-\frac{\sigma^2}{2} u^{\gamma}\left|\frac{\partial u}{\partial x} \right|^{\beta}\frac{\partial^2 u}{\partial x^2}=0.
\end{equation}
Let 
\begin{equation}\label{alph-gamma}
\alpha=\frac{\gamma}{1-\gamma} \quad\Leftrightarrow\quad \gamma=\frac{\alpha}{1+\alpha}.       
\end{equation}
Define a $u$-$v$ mapping via
\begin{equation}\label{alpha-def}
v \triangleq u^{\frac{1}{\alpha+1}}.
\end{equation}
Then, $v(x,t)\ge 0$ is the solution of
\begin{equation}\label{div-eq}
\hat{L}v=\frac{\partial v}{\partial t} - q_0\frac{\partial}{\partial x}\left(v^{\gamma_0}\left|\frac{\partial v}{\partial x}\right|^{\beta}\frac{\partial v}{\partial x}\right)=0,
\end{equation}
where
\begin{equation}\label{gama-alpha-def}
q_0=\frac{\sigma^2}{2}(\alpha+1)^{\beta}   \qquad \text{and}  \qquad \gamma_0=\alpha(\beta+1).
\end{equation}
\end{theorem}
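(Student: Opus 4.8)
The plan is to prove the statement by direct substitution and the chain rule, with no appeal to any a~priori theory: the map \eqref{alpha-def} is an explicit pointwise change of the dependent variable, so I would substitute $u=v^{\alpha+1}$ into \eqref{non-div-eq}, express every derivative of $u$ through derivatives of $v$, and show that the resulting identity is exactly $\hat{L}v=0$ after cancelling a common positive factor. Since $0\le\gamma<1$ forces $\alpha=\gamma/(1-\gamma)\ge 0$ by \eqref{alph-gamma}, the exponent $\alpha+1\ge 1$ is finite and positive, so $v=u^{1/(\alpha+1)}\ge 0$ is well defined and single-valued for $u\ge 0$. I would carry out the computation first on the open set where $u>0$ and $\partial v/\partial x\neq 0$, where all quantities are classical, and then extend the conclusion by continuity.

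First I would record the derivatives of $u=v^{\alpha+1}$, namely $\partial u/\partial t=(\alpha+1)v^{\alpha}\,\partial v/\partial t$, $\partial u/\partial x=(\alpha+1)v^{\alpha}\,\partial v/\partial x$, and $\partial^2 u/\partial x^2=(\alpha+1)v^{\alpha-1}\big[\alpha(\partial v/\partial x)^2+v\,\partial^2 v/\partial x^2\big]$. The pivotal simplification is the exponent identity built into \eqref{alph-gamma}: because $(\alpha+1)\gamma=\alpha$, one has $u^{\gamma}=v^{\alpha}$, and likewise $|\partial u/\partial x|^{\beta}=(\alpha+1)^{\beta}v^{\alpha\beta}|\partial v/\partial x|^{\beta}$ (using $v\ge 0$ and $\alpha+1>0$). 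Substituting all of this into \eqref{non-div-eq} and dividing through by the common factor $(\alpha+1)v^{\alpha}$ collapses the many powers of $v$ into a single exponent, leaving $\partial v/\partial t$ equal to a constant multiple of $|\partial v/\partial x|^{\beta}\big[\alpha\,v^{\gamma_0-1}(\partial v/\partial x)^2+v^{\gamma_0}\,\partial^2 v/\partial x^2\big]$, with $\gamma_0=\alpha(\beta+1)$ emerging automatically from the arithmetic of the exponents.

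It then remains to recognize this expression as a divergence. I would compute $\partial_x\big(v^{\gamma_0}|\partial v/\partial x|^{\beta}\,\partial v/\partial x\big)$ by the product rule, using that $\partial_x\big(|\partial v/\partial x|^{\beta}\,\partial v/\partial x\big)=(\beta+1)|\partial v/\partial x|^{\beta}\,\partial^2 v/\partial x^2$ wherever $\partial v/\partial x\neq 0$, and match the result term by term against the expression obtained above. Equating the coefficients of the $\partial^2 v/\partial x^2$ contribution and of the $(\partial v/\partial x)^2$ contribution must yield one and the same constant $q_0$; this agreement is the internal consistency check of the whole reduction and is what fixes the value recorded in \eqref{gama-alpha-def}.

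The main obstacle I anticipate is not conceptual but careful bookkeeping at the two places where the equation degenerates. The factor $(\beta+1)$ produced by differentiating the $\beta$-homogeneous nonlinearity $|\partial v/\partial x|^{\beta}\,\partial v/\partial x$ must be tracked precisely, since it feeds directly into the constant $q_0$; and the non-smoothness of this term at $\partial v/\partial x=0$, together with the degeneracy of the principal coefficient at $v=0$, means the pointwise identities I use are valid only off the degenerate set. I would therefore close the argument by invoking the regularity $v\in\mathcal{C}^{2,1}_{x,t}$ to extend the equality $\hat{L}v=0$ to the whole domain, reading it classically where $v>0$ and $\partial v/\partial x\neq 0$ and as the corresponding limiting identity on the degenerate set.
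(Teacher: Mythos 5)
Your proposal is correct and is essentially the paper's own argument run in the opposite direction: the paper starts from the divergence-form operator with an undetermined constant $C_1$, multiplies by $v^{\alpha}$, and matches the result to $Lu$ via the chain rule, which is exactly the identity you obtain by substituting $u=v^{\alpha+1}$ into \eqref{non-div-eq} and dividing by $(\alpha+1)v^{\alpha}$. One caution: the final coefficient-matching step you describe, carried out to the end, yields the divergence-form coefficient $q_0/(\beta+1)$ rather than $q_0$ --- the factor $(\beta+1)$ produced by differentiating $\left|\partial v/\partial x\right|^{\beta}\,\partial v/\partial x$ does not cancel --- which agrees with the paper's own choice $C_1=q_0/(1+\beta)$ in its proof and shows that the constant in \eqref{gama-alpha-def} should be read as $\tfrac{\sigma^2}{2}(\alpha+1)^{\beta}/(\beta+1)$; so the ``internal consistency check'' you propose is worth actually performing, since it exposes this normalization discrepancy between the stated theorem and the computation.
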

\begin{proof} We proceed as follows.
\begin{enumerate}
    \item Since we have assumed that $x\in \mathbb{R}$, then $|\nabla v|= \sqrt{(\partial v/\partial x)^2}$. Additionally, for any function $v\in \mathcal{C}^1_x$ with $\gamma_0$ given in \cref{gama-alpha-def}, it is straightforward to show that
\begin{equation}\label{ux-u-gamm}
\frac{1}{(\alpha+1)^{1+\beta}}\left|\frac{\partial (v^{\alpha +1})}{\partial x}\right|^\beta \frac{\partial (v^{\alpha +1})}{\partial x} 
= v^{\gamma_0}\left|\frac{\partial v}{\partial x}\right|^\beta \frac{\partial v}{\partial x},
\end{equation}
where we have used the fact that $v\ge0$.
\item
From the above, it follows that, for some constant $C_1$,
\begin{equation}\label{Lu-mod}
\begin{split}
L_0v &=\frac{\partial v}{\partial t}-C_1 \frac{\partial }{\partial x}\left(v^{\gamma_0}\left|\frac{\partial v}{\partial x}\right|^\beta \frac{\partial v}{\partial x}\right) \\
&= \frac{\partial v}{\partial t}-C_1\frac{1}{(\alpha+1)^{1+\beta}} \frac{\partial}{\partial x}\left(\left|\frac{\partial (v^{\alpha +1})}{\partial x}\right|^\beta \frac{\partial (v^{\alpha +1})}{\partial x}\right) .
\end{split}
\end{equation}
\item
Multiplying \cref{Lu-mod} by $v^{\alpha}$ and using the mapping \eqref{alpha-def}, we obtain
\begin{equation}\label{Lu-mod-1}
\begin{split}
v^\alpha L_0v &= v^{\alpha}\frac{\partial v}{\partial t}-C_1\frac{1}{(\alpha+1)^{1+\beta}} v^{\alpha} \frac{\partial}{\partial x}\left(\left|\frac{\partial (v^{\alpha +1})}{\partial x}\right|^\beta \frac{\partial (v^{\alpha +1})}{\partial x}\right) \\
&= \frac{1}{(\alpha+1)} \frac{\partial u}{\partial t} - C_1 \frac{1}{(\alpha+1)^{1+\beta}} u^{\frac{\alpha}{1+\alpha}} \frac{\partial}{\partial x}\left(\left|\frac{\partial u}{\partial x}\right|^\beta \frac{\partial u}{\partial x}\right).
\end{split}
\end{equation}
Introducing $\gamma$ from \cref{alph-gamma} into \cref{Lu-mod-1}:
\begin{equation}\label{vLv}
    (\alpha+1)v^\alpha L_0 v=\frac{\partial u}{\partial t} - C_1 \frac{1}{(\alpha+1)^{\beta}} u^{\gamma} \frac{\partial}{\partial x}\left(\left|\frac{\partial u}{\partial x}\right|^\beta \frac{\partial u}{\partial x}\right).
\end{equation}
\item 
Observe that 
\begin{equation} \label{uxuxx}
\frac{\partial}{\partial x}\left(\left|\frac{\partial u}{\partial x}\right|^{\beta}\frac{\partial u }{\partial x}\right)
 = (1+\beta)\left|\frac{\partial u}{\partial x}\right|^\beta\frac{\partial^2 u}{\partial x^2}.
\end{equation}
\item Substituting \cref{uxuxx} into \cref{vLv}, we obtain
\begin{equation}\label{vLvu}
    (\alpha+1)v^\alpha L_0 v=\frac{\partial u}{\partial t} - C_1 \frac{(1+\beta)}{(\alpha+1)^{\beta}} u^{\gamma}\left|\frac{\partial u}{\partial x}\right|^\beta \frac{\partial^2 u}{\partial x^2}.
\end{equation}
\item Suppose that $L_0v=0$ and let $C_1=\frac{q_0}{1+\beta}$, then we obtain the desired result.
\end{enumerate}
\end{proof}

\begin{remark}\label{remove-q} 
We can remove the coefficient $q_0$ from \cref{div-eq} by letting $(x',t')=(x,q_0t)$, where $q_0$ is given in \cref{gama-alpha-def}.
\end{remark}


\section{Numerical examples of the localization property of solutions}
\label{sec:numerical}

\subsection{Preliminaries}

In light of \cref{remove-q}, let us relabel $x'\leftrightarrow x$ and $t'\leftrightarrow t$ for simplicity. Now, based on \cref{div-eq} and the above deliberations, consider the 1D problem:
\begin{equation}\label{div-eq-1D}
\frac{\partial v}{\partial t}-\frac{\partial}{\partial x}\left(v^{\gamma_0}\left|\frac{\partial v}{\partial x}\right|^{m}\frac{\partial v}{\partial x}\right)=0,
\end{equation}
where the sign of $m$ is not fixed. The results from \cref{sec:localization} apply for the case of $m=\beta>0$. In this section, we will discuss some cases (beyond the scope of our earlier results) that do not assume $m>0$, hence the change in notation.

\Cref{div-eq-1D} is subject to the \emph{compact} initial condition (IC):
\begin{equation}
v(x,0) = v_0(x) = \begin{cases} 
\displaystyle \frac{3}{4x_0}\left[1 - \left(\frac{x}{x_0}\right)^2\right], &\quad|x|\le x_0,\\ 
0, &\quad|x|>x_0.
\end{cases}
\label{eq:v_ic_box}
\end{equation}
Take $x_0=1$ without loss of generality. Note that we do not take a ``box'' function because then $|dv_0/dx|$ would be undefined as $|x|\to x_0$, which would cause numerical difficulties. Instead, we take a ``mound'' function (inverted parabola) of unit area.

We solve \cref{div-eq-1D} subject to \eqref{eq:v_ic_box} on a finite length domain, $x\in[-x_\mathrm{max},+x_\mathrm{max}]$ subject to ``natural'' (Neumann) boundary conditions (BCs) at the endpoints:
\begin{equation}
    \left.\frac{\partial v}{\partial x}\right|_{x=\pm x_\mathrm{max}} = 0 \qquad \forall t\in [0,T].
\label{eq:Neumann_bc_u}
\end{equation}
The interval $[-x_\mathrm{max},+x_\mathrm{max}]$ is chosen to be large enough, so that the imposed BCs at $x=\pm x_\mathrm{max}$ have no influence whatsoever on the nonlinear diffusion process of a localized initial condition, during the time interval $(0,T]$ of interest. 

\subsection{Explicit construction of the Kompaneets--Zel'dovich--Barenblatt-type barrier function}
\label{sec:self-similar}

Given the localization property proved in \cref{sec:localization}, finite speed of expansion of the compact support of the initial condition is expected under generic conditions without the need to explicitly construct a barrier function. However, for a class of 1D PDEs arising from our nonlinear Einstein paradigm, it is also possible to explicitly construct the spreading self-similar solutions with compact support.

Therefore, for completeness and to illustrate our general mathematical result, in this subsection, we construct solutions with finite speed of propagation explicitly using the Kompaneets--Zel'dovich--Barenblatt self-similarity approach \cite{Zel67,Barenblatt1972,Barenblatt96}. This calculation, of course, is only explicit for the 1D model considered in this section, \textit{i.e.}, \cref{div-eq-1D} subject to $v(x,0) = \delta(x)$ (point source, or fundamental, solution).

To this end, let the support of the solution increase as $[-x_f(t),+x_f(t)]$ for some \emph{moving front} function $x_f(t)$ to be determined. The point-source initial condition together with the localization property of the solution imposes a ``finite mass'' constraint on $v$: 
\begin{equation}\label{eq:finite-mass}
    \int_{-x_f(t)}^{+x_f(t)} v(x,t)\,dx = 1 \quad\forall t\ge0. 
\end{equation}
Next, assume that $v(x,t) = Vt^{-\varsigma} f(\xi)$, where $V$ is a constant, $\xi \triangleq \eta/\eta_f$ is the normalized similarity variable, with $\eta \triangleq xt^{-\varsigma}$ and $\eta_f$ being the (constant) value of $\eta$ at $x=x_f(t)$ (\textit{i.e.}, $\eta_f = x_f(t)t^{-\varsigma}$). Here, $V$, $\varsigma$ and $\eta_f$ must be determined from the PDE~\eqref{div-eq-1D} and any initial/boundary condition(s) to ensure the self-similarity assumption that $v(x,t) = Vt^{-\varsigma} f(\xi)$ holds. Substituting this transformation into \cref{div-eq-1D}, it is easy to show that such a solution exists if the following ordinary differential equation (ODE) is satisfied:
\begin{equation}\label{eq:f-ode}
\varsigma \left(f + \xi f'\right) + (f^{\gamma_0} |f'|^m f')' = 0, \qquad \xi \in(-1,+1),
\end{equation}
with
\begin{equation}\label{eq:nu-A}
     \varsigma = \frac{1}{\gamma_0 + 2m + 2}, \qquad V = \eta_f^{(m+2)/(\gamma_0+m)}.
\end{equation}
Here, prime denotes differentiation with respect to the argument of $f$, \textit{i.e.}, $d/d\xi$. By the localization property of the solution, $f(\pm1)=0$. Finally, $\eta_f$ is determined by transforming \cref{eq:finite-mass}:
\begin{equation}\label{eq:etaf}
    \eta_f = \left[\int_{-1}^{+1} f(\xi) \, d\xi\right]^{-(\gamma_0+m)/(\gamma_0+2m+2)},
\end{equation}
having used the expression for $V$ from \cref{eq:nu-A}.

Next, observe that the left-hand side of the ODE \eqref{eq:f-ode} is a product rule, hence a first integral exists:
\begin{equation}\label{eq:f-first-ingetral}
    \varsigma \xi f+ f^{\gamma_0} |f'|^m f' = 0,
\end{equation}
where the constant of integration is set to zero by requiring $\xi \leftrightarrow -\xi$ symmetry. Symmetry also allows us to deal with the absolute value in \cref{eq:f-first-ingetral} by restricting to $\xi\ge 0$. The simulations to be discussed in \cref{sec:numerical_examples} suggest that $f(\xi)$ is monotonically decreasing on $[0,1]$, so we can set $|f'| = -f'$. Then, the first integral \eqref{eq:f-first-ingetral} of the ODE becomes
\begin{equation}
    f^{(\gamma_0-1)/(m+1)} f' = -\left(\varsigma \xi\right)^{1/(m+1)}.
\end{equation}
Now, using $f(1)=0$ and $\varsigma$ from \cref{eq:nu-A}, the solution to the ODE~\eqref{eq:f-ode} is
\begin{equation}\label{eq:f-soln}
  f(\xi) = \left[ \left(\frac{\gamma_0 +m}{m+2}\right)  (\gamma_0 + 2m + 2)^{\frac{-1}{m+1}} \left(1 - \xi^{\frac{m+2}{m+1}} \right) \right]^{\frac{m+1}{\gamma_0 +m}}, \quad \xi\in[0,1].
\end{equation}
From the constraint~\eqref{eq:etaf} it follows that
\begin{equation}
    \eta_f = \left(\frac{m+2}{\gamma_0+m}\right)^{\frac{m+1}{\gamma_0+2m+2}} (\gamma_0+2 m+2)^{\frac{1}{\gamma_0+2m+2}} 
    \left[\frac{1}{2}\left(\frac{m+2}{m+1}\right)\frac{\Gamma \left(\frac{m+1}{m+2}+\frac{m+1}{m+\gamma_0}+1\right)}{\Gamma\left(\frac{m+1}{m+2}\right) \Gamma \left(\frac{m+1}{m+\gamma_0}+1\right)} \right]^{\frac{\gamma_0+m}{\gamma_0+2m+2}},
\end{equation}
where $\Gamma(z)$ is the Gamma function. See also the works by Eggers and Fontelos \cite{Eggers2015} (Exercise 3.5, p.~59 therein) or Pattle  \cite{Pattle1959} for the (classical) special case of $m=0$.
 
Motivated by Barenblatt \cite{Barenblatt52,Barenblatt52b}, we observe that the behavior of the gradient $f'$, as $\xi\to0$ or $\xi\to1$, depends on $\gamma_0$ and $m$. From the self-similar solution~\eqref{eq:f-soln},  we compute
\begin{equation}\label{eq:f-prime}
    f'(\xi) = -(\gamma_0+2 m+2)^{\frac{-1}{m+1}} \xi ^{\frac{1}{m+1}} \left[ \frac{(\gamma_0+m)}{(m+2)(\gamma_0+2 m+2)^{\frac{1}{m+1}}} \left(1-\xi ^{\frac{m+2}{m+1}}\right) \right]^{\frac{1-\gamma_0}{\gamma_0+m}}
\end{equation}
to understand the possible scenarios.
In \cref{sec:localization}, it was assumed that $m>0,\gamma_0\ge0$. Here, let us consider all possibilities instead. From \cref{eq:f-prime}, it follows that the behavior of $f'(\xi)$ is regular as $\xi\to0$ and $f'(0)=0$ $\forall \gamma_0$ if $m>-1$ and singular with $|f'(\xi)| \to \infty$ as $\xi\to0^+$ if $m<-1$. The special case of $m=-1$, if it is of interest, appears to be problematic and requires more careful analysis. Meanwhile, as $\xi\to1^-$, $f'(\xi)$ is regular and $f'(1) = 0$ if $(1-\gamma_0)/(\gamma_0 + m) >0$ and singular with $f'(\xi) \to - \infty$ as $\xi\to1^-$ if $(1-\gamma_0)/(\gamma_0 + m) <0$. The special case of $\gamma_0=1$ is interesting because then $f'(1) = -(2 m+2)^{\frac{-1}{m+1}} <0$, assuming $m>-1$.

These observations also highlight that the self-similar solution is only a weak solution of the original PDE~\eqref{div-eq-1D} (see, \textit{e.g.}, Ref.~\cite{Gilding76} and related literature).

Next, we numerically verify the localization property of solutions (\textit{i.e.}, the main mathematical result on finite speed of propagation), as well as the predictions from this section based on the self-similarity analysis.

\subsection{Numerical results}
\label{sec:numerical_examples}

The self-similar solution $v(x,t) = Vt^{-\varsigma}f(\xi)$ based on \cref{eq:f-soln}, satisfying the normalization condition~\eqref{eq:finite-mass}, is strictly speaking the fundamental solution \cite{Kamin1989}, and it is not the exact solution for the initial data~\eqref{eq:v_ic_box}. However, from the principle of \emph{intermediate asymptotics} \cite{Barenblatt1972,Barenblatt96}, we know that a solution of \cref{div-eq-1D} subject to \eqref{eq:v_ic_box} eventually converges (for large $t\to\infty$) to the point-source self-similar solution, as the initial data is ``forgotten.'' That is, the self-similar solution from \cref{sec:self-similar} is, in a sense, an \emph{attractor}. Hence, the support of the solution $v(x,t)$ starting from the compactly supported initial condition~\eqref{eq:v_ic_box} remains compact for $t>0$. In other words, $v\big(|x|\ge x_f(t),t\big) = 0$ while $v\big(|x|<x_f(t),t\big) \ne 0$ (given the $x$-symmetry of the problem).  Importantly, from the principle of intermediate asymptotics, we know that the compact support of this non-fundamental solution will (for large $t$) expand with the \emph{same} finite speed of propagation as the fundamental (self-similar) solution.

Next, to verify these claims, we use the strongly-implicit finite-difference scheme in conservative form developed and benchmarked by Ghodgaonkar and Christov \cite{gc19}. This scheme is well suited for the types of gradient-degenerate diffusion equations~\eqref{div-eq-1D} considered in this work. We ensured that the reported numerical results are grid and time-step independent. After obtaining the numerical solution of the PDE~\eqref{div-eq-1D} subject to \eqref{eq:v_ic_box}, we found $x_f(t)$ s.t.\ $v\big(x = \pm x_f(t),t\big) = 0$. This quantity can then be compared to the prediction of the self-similarity analysis. Specifically, $x_f(t) \sim t^{1/(\gamma_0+2m+2)}$, or $\log x_f(t) \sim \frac{1}{\gamma_0+2m+2}\log t$, based on \cref{sec:self-similar}.

\begin{figure}[ht!]
\centering
\subfigure[]{\includegraphics[width=0.75\textwidth]{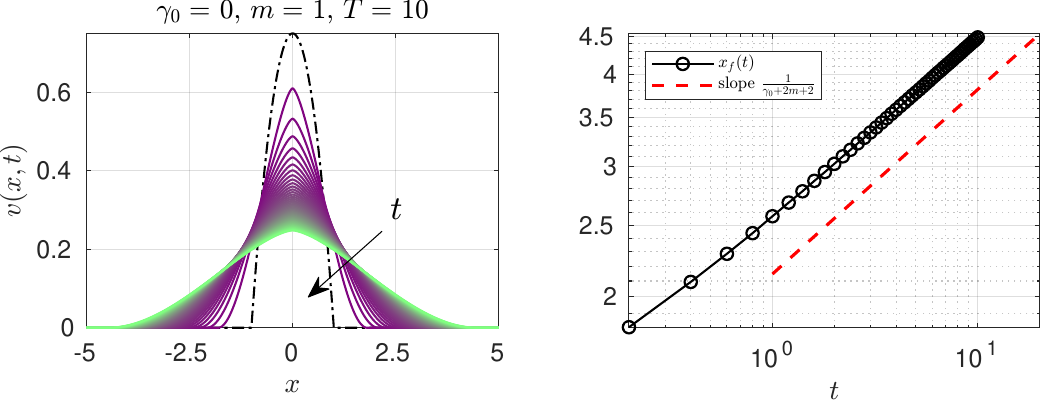}}
\subfigure[]{\includegraphics[width=0.75\textwidth]{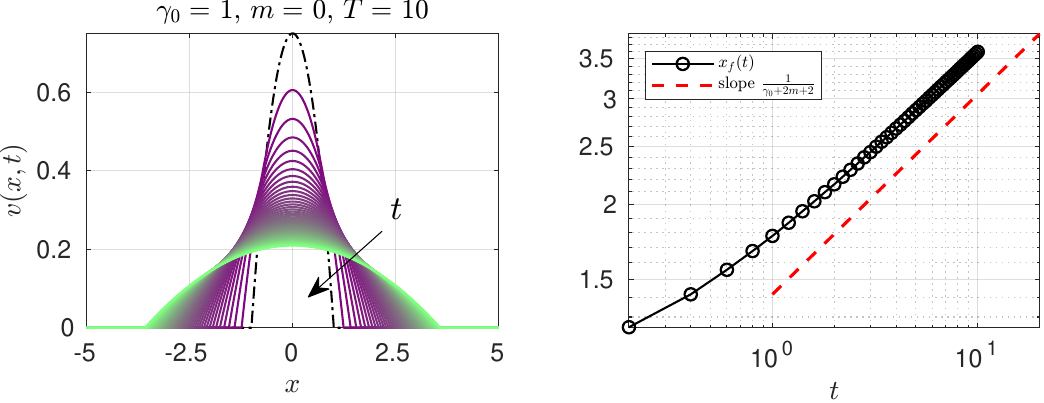}}
\subfigure[]{\includegraphics[width=0.75\textwidth]{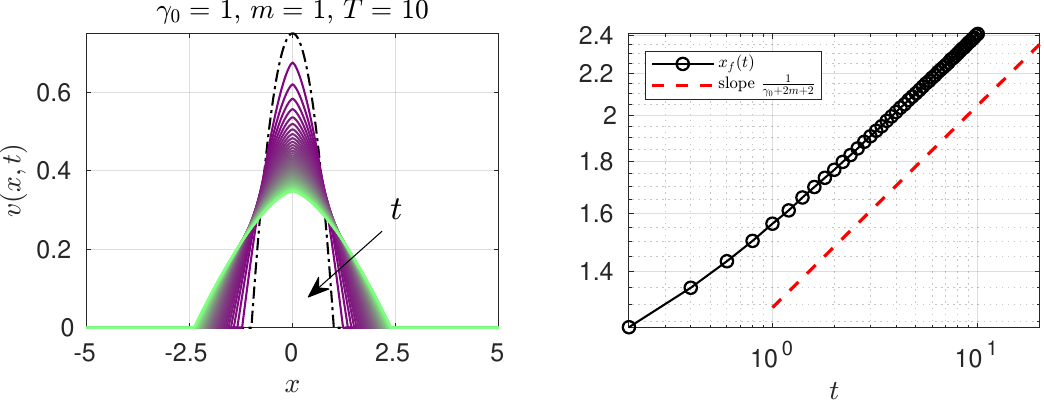}}
\caption{Example numerical simulations showing finite speed of propagation of initially compact solutions of the PDE~\eqref{div-eq-1D} from the initial condition~\eqref{eq:v_ic_box} (shown as dash-dotted curves in left panels), for different choices of nonlinearity exponents $\gamma_0$ and $m$, up to $t=T=10$. The speed of propagation, as predicted by the analysis in \cref{sec:self-similar}, is represented by the slope of the dashed trendlines in the right panels. Note that (b) is the special case corresponding to the ``Barenblatt solution'' of the porous medium equation. The time evolution in the left panels is in the direction of the arrow from dark to light colored curves. The plots in the right panels are on a log-log scale.}
\label{fig:simulation}
\end{figure}

The numerical results are shown in \cref{fig:simulation}. We confirm the finite speed of propagation of the solution at the speed predicted by self-similarity analysis.

Also, observe the sharp crest of the solution at $x=0$ for $m=1$ in \cref{fig:simulation}(a,c). Indeed, \cref{eq:f-prime} suggests that $\partial v/\partial x \sim \sgn(x) |x|^{1/2}$ as $|x|\to0$, hence there's a curvature singularity at $x=0$ leading to the sharp crest. Additionally, as discussed in \cref{sec:self-similar}, $\partial v/\partial x \to 0$ as $|x|\to x_f(t)$ if $\gamma_0\ne0$ (\cref{fig:simulation}(a)), but $\partial v/\partial x \to \sgn(x) const.$ as $|x|\to x_f(t)$ in the special case of $\gamma_0=1$ (\cref{fig:simulation}(b,c)).

\section{Discussion on the localization property}
\label{sec:discussion}

The early references on the localization property for parabolic equations link to the work of Zel'dovich. The localization property was then proved by Barenblatt by constructing the ``Barenblatt type'' of barrier function for the corresponding degenerate nonlinear PDE. They understood that, if the diffusivity degenerates with respect to the solution, then the solution exhibits finite speed of propagation, which can provide a more realistic interpretation of physical observations typically understood via a Darcy/Fourier/Fick-type law. Specifically, with degenerate PDEs, the localization property of the solution of the evolutionary equation is called, in some publications, a ``dead zone." In other words, a non-negative solution of the steady-state evolutionary equation, if it exists, must be zero outside of some compact support. Independently, this property of solutions of elliptic equations with nonlinear absorption (dependent on the solution only) was investigated by Landis \cite{Landis,Landis1993} using his method of lemma of the growth of a narrow domain. This approach leads to a very different perspective on the localization property of solutions.

In the recent work by Tedeev and Vespri \cite{ves-ted}, a method based on De Giorgi's iteration (see also the Ladyzhenskaya iterative process) was employed to prove this essential feature for a class of degenerate parabolic equation in divergence form. We used the Tedeev and Vespri approach as our groundwork, but we provided a proof of the localization property (\cref{sec:localization}) for degenerate parabolic equation in \emph{non-divergence form}, which we derived from a generalized Einstein Brownian motion paradigm (\cref{sec:paradigm}), giving the non-divergence form equations a new physical interpretation. It is appropriate to mention that Landis used his method to provide an alternative proof of De Giorgi's celebrated theorem on H\"{o}lder continuity of the solution of elliptic equation of second order \cite{Landis67}. We believe that, by employing the Landis method, in future work we can significantly widen the class of equations for which the localization property holds. In this way, we can also extend the present results by keeping the absorption term in \cref{dif-drift-absorp-generic}. Importantly, when starting from the non-divergence form of the equation under the Einstein paradigm, we do not require the smoothness of coefficients, which was assumed in previous works. 

\begin{figure}[ht!]
\centering
\includegraphics[scale=0.7]{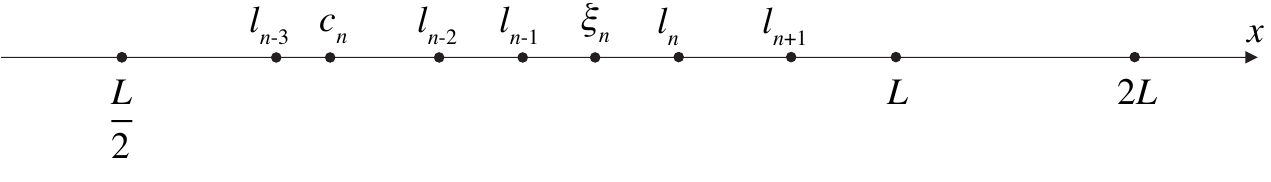}
\caption{Schematic illustrating of the location of the points on the $x$-axis.}
\label{fig:schematic-points}
\end{figure}

Finally, we sketch the arguments that underlay Landis' machinery for proving the localization property of solutions to the 1D degenerate Einstein model. As it was explained above, for the time interval of free jumps in the most basic nonlinear model Einstein, one can assume that $\tau = \tau_0 u^{-\gamma}$ ($\tau_0$ can be absorbed into $t$ and scaled out). Here, $0\leq \gamma<1$ and $u\geq 0$. Then, one can rewrite the model as follows:
\begin{align}
 &L_0u=
 \frac{\partial u^{1-\gamma}}{\partial t}-\frac{\partial^2 u}{\partial x^2}=0 \; \text{ in the domain } \; 0<x<2L, \; \text{ for } \; 0<t<\infty, \label{L0-eq}\\
 &u(x,0)=u_0(x)>0 \ \text{ for } \ {0<x<\frac{L}{2}},\label{non-zero  data} \\
& u(x,0)=0 \; \text{ for } \; {\frac{L}{2}\leq x < 2 L},\label{zero data}\\
 &\frac{\partial u}{\partial x}(0,t)=\frac{\partial u}{\partial x}(2L,t)=0 \; \text{ for } \; 0<t<\infty .\label{Neumann}
\end{align}

It will be convenient to use the notation introduced in the diagram in  \cref{fig:schematic-points}.
Let $l_n=L-2^{-n}$ and $I_n=\{x:2L>x>l_n\}$ and $J_n=I_{n}\setminus I_{n+1}$. In the layers $J_{n-1}$ and $J_{n-2}$ introduce auxiliary points $\xi_n\in J_{n-1}$ and $c_n\in J_{n-2}$, which will be specified later. First observe that, due to initial and boundary conditions, the function $v(x)=\int_0^T u(x,\tau)\,d\tau$ satisfies a maximum principle: $v(c_n)\geq v(\xi_n)$ for any points $c_n<\xi_n$. After integration by parts in the domain $\tilde{I}_n = I_n \cap\{x>\xi_n\}$ and using the Lagrange mean value theorem with $\xi_n$ as a mean point, one can easily get the following inequality from maximum principle for the function $v(x)$:
\begin{equation}\label{iter-beg}
 \max_{0<\tau<T}\int_{I_{n+1}} u^{1-\gamma}(x,\tau) \,dx \leq \int_{I_n} u^{1-\gamma}(x,T) \,dx \leq 2^n \int_0^T u(\xi_n,\tau) \,d\tau.
\end{equation}
One more application of the maximum principle for $v(x)$ on the domain $\tilde{I}_n=I_{n-2}\cap \{x>c_n\}$ and selecting $c_n=c_n^*$ as an intermediate value point from the integral mean value theorem for the  function $u^{1-\gamma}$, and using arguments as in \cref{1-d-mean-value} with $b=l_{n-3}$ and $a=l_{n-2}$, we obtain      
\begin{equation}\label{iterative-1}
 \int_{I_n} u^{1-\gamma}(x,T) \,dx \leq 2^n \cdot 2^n\int_0^T\left(\,\, \int _{J_{n-2}} u^{1-\gamma}(x,\tau)\,dx\right)^{\frac{1}{1-\gamma}} \,d\tau.
\end{equation}
Then, due to \cref{iter-beg}:
\begin{equation}\label{iterative-2}
 \max_{0<\tau<T}\int_{I_{n+1}} u^{1-\gamma}(x,\tau) \,dx \leq  4^n\cdot T \max_{0<\tau<T}\left(\,\, \int _{I_{n-2}} u^{1-\gamma}(x,\tau)\,dx\right)^{1+\epsilon_0} , \qquad \epsilon_0 = \frac{\gamma}{1-\gamma}.
\end{equation}

From the above inequality follows an iterative estimate similar to the one that we obtained in the iterative inequality in \cref{ite-sheme} above. Consequently, we can conclude the localization property holds for the non-negative  classical solution of the IBVP~\eqref{L0-eq}--\eqref{Neumann} with compact support of the initial data. 

Namely, if initial data in the IBVP \eqref{L0-eq}--\eqref{Neumann} with $0\leq\gamma<1$ are s.t.
\begin{equation}\label{T-model-cond}
    \max_{0<\tau<T}\int_{0}^{2 L} u^{1-\gamma}(x,\tau) \,dx\leq T^{-1/{\epsilon_0}}4^{-1/{\epsilon_0^2}} ,
\end{equation}
then 
\begin{equation}
 u(x,t)\equiv 0 , \text{ for all } x\in[L,2L] \text{ during the time interval } t\in[0,T].     
\end{equation}
\begin{lemma}\label{1-d-mean-value} 
Let $u(x,t)\geq 0$ be a continuous function on $[a,b]\times [0,T]$, and $\delta>0$. For any $T>0$ there exists $a<c^*<b$, s.t.
\begin{equation}\label{int-mean-ineq}
\int_0^Tu^{\delta}(c^*,\tau) \,d\tau \leq \frac{T}{b-a}\max_{0\leq t\leq T}\int_a^b u^\delta (x,t) \,dx.
\end{equation}
\end{lemma}
\begin{proof}
Proof follows from the argument. For any $T>\tau>0$, there exists $a<c(\tau)<b$ s.t.
\begin{equation}\label{u-delta-int-ineq}
 u^\delta \big(c(\tau),\tau\big)=\frac{1}{b-a}\int_a^bu^\delta(x,\tau) \,dx \leq \max_{0\leq t\leq T }\frac{1}{b-a}\int_a^bu^\delta(x,t) \,dx.
\end{equation}
\end{proof}


\section*{Acknowledgments}
We thank Prof.\ J.~R.~Walton for insightful questions about the behavior of the slope of the self-similar solution at the moving front.

\subsection*{Author contributions}
\textbf{Ivan C.\ Christov} --- Conceptualization, Investigation, Project administration, Software, Writing -- original draft. 
\textbf{Isanka Garli Hevage} --- Formal analysis, Investigation, Methodology, Writing -- original draft. 
\textbf{Akif Ibraguimov} --- Conceptualization, Formal analysis, Methodology, Project administration, Supervision, Writing -- original draft. 
\textbf{Rahnuma Islam} --- Formal analysis, Investigation, Methodology, Writing -- original draft. 

\subsection*{Financial disclosure}
None reported.

\subsection*{Conflict of interest}
The authors declare no potential conflicts of interest.

\setlength{\bibsep}{0.0pt}
\bibliography{references.bib}

\end{document}